\def\address#1{\expandafter\def\expandafter\@aabuffer\expandafter
	{\@aabuffer{\affiliationfont{#1}}\relax\par
	\vspace*{13pt}}}
\DeclareMathOperator{\re}{Re}
\DeclareMathOperator{\TId}{TanId}
\begin{document}

\numberwithin{equation}{section}
\newtheorem{thm}{Theorem}[section]  
\newtheorem*{thm*}{Theorem}
\newtheorem*{thmA}{Theorem A}
\newtheorem*{thmB}{Theorem B}
\newtheorem*{propC}{Proposition C}
\newtheorem{thmx}{Theorem}
\newtheorem{sublem}{Lemma}[thm]
\newtheorem{lem}[thm]{Lemma} 
\newtheorem{cor}[thm]{Corollary}
\newtheorem{prop}[thm]{Proposition}
\theoremstyle{remark}
\newtheorem{rem}[thm]{Remark}
\newtheorem{remx}[thmx]{Remark}
\newtheorem{ex}[thm]{Example}
\theoremstyle{definition}
\newtheorem{Def}[thm]{Definition}
\renewenvironment{proof}[1][\proofname]{{\bfseries #1. }}{\qed \\ \newline} 

\newcommand{\BO}{O}
\newcommand{\lo}{o}
\newcommand{\0}{\mbox{O}}
\newcommand{\R}{\mathbb{R} }
\newcommand{\C}{\mathbb{C} }
\newcommand{\Z}{\mathbb{Z}}
\newcommand{\p}{\mathbb{P}}
\newcommand{\ol}{\overline}
\newcommand{\N}{\mathbb{N}}
\newcommand{\wt}{\widetilde}
\newcommand{\Id}{\operatorname{Id}}
\newcommand{\Arg}{\operatorname{Arg}}
\newcommand{\A}{\hat{A}_2}
\newcommand{\B}{\hat{B}_2}
\newcommand{\F}{\mathcal{F}}
\newcommand{\Oa}{\Omega_1}
\newcommand{\Ob}{\Omega_2}
\newcommand{\Oc}{\Omega_3}

\newcommand\blfootnote[1]{
  \begingroup
  \renewcommand\thefootnote{}\footnote{#1}
  \addtocounter{footnote}{-1}
  \endgroup
}
\renewcommand{\thefootnote}{\alph{footnote}}

\title{Domain of attraction for maps tangent to the identity in $\C^2$ with characteristic direction of higher degree}
\author{Sara Lapan}
\subjclass[2010]{Primary: 37F10; Secondary: 32H50}  
\date{\today}
\maketitle
\begin{center}Department of Mathematics\\ Northwestern University, \\ Evanston, IL; USA \\
slapan@math.northwestern.edu
\end{center}
\begin{abstract}We study holomorphic fixed point germs in two complex variables that are tangent to the identity and have a degenerate characteristic direction.  We show that if that characteristic direction is also a characteristic direction for higher degree terms, is non-degenerate for a higher degree term, and satisfies some additional properties, then there is a domain of attraction on which points converge to the origin along that direction. \end{abstract}

\pagestyle{myheadings}
\markright{Attracting domain}

\section*{Introduction}
In this paper, we will be studying germs of holomorphic self-maps of $\C^2$ that are tangent to the identity at a fixed point and have a degenerate characteristic direction $[v]$ that satisfies some additional properties.  The conditions of the main theorem allow for $[v]$ to be a degenerate characteristic direction of any type (i.e., dicritical, Fuchsian, irregular, or apparent).  See \S\ref{prelim} for definitions.

\begin{thmA}\label{thmA}
Let $f$ be a germ of a holomorphic self-map of $\C^2$ that is tangent to the identity at a fixed point $p$, is of order $k+1$, and has characteristic direction $[v]\in\p^1(\C)$.  Assume that $[v]$ is:
\begin{enumerate}
\item a characteristic direction of degree $s\leq\infty$;
\item non-degenerate of degree $r+1$, where $k<r<s$; and
\item of order one in degree $t+1$, where $k\leq t\leq r$.
\end{enumerate}
If $[v]$ is transversally attracting and $s>r+t-k$, 
then there exists a domain of attraction whose points, under iteration by $f$, converge to $p$ along $[v]$. 
\end{thmA}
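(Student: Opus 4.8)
The plan is to reduce $f$ to a sectorial normal form adapted to $[v]$ and then to propagate, in parallel, two essentially one-dimensional estimates: one that forces $|z_n|\to0$ and one that forces $|w_n/z_n|\to0$, in the spirit of Hakim's construction of attracting domains at non-degenerate characteristic directions. The four numerical hypotheses and the inequality $s>r+t-k$ will be used precisely to make the two estimates close against one another.

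First I would normalize: place $p$ at the origin and choose coordinates with $[v]=[1:0]$, writing $f_1(z,w)=z+\sum_{j\ge k+1}P_{1,j}(z,w)$ and $f_2(z,w)=w+\sum_{j\ge k+1}P_{2,j}(z,w)$ with $P_{i,j}$ homogeneous of degree $j$. The hypotheses translate into structural constraints on the monomials: by (1), $w\mid P_{2,j}$ for $k+1\le j\le s$ (and a nonzero $z^{s+1}$ term appears in $P_{2,s+1}$ when $s<\infty$); by (3), in fact $P_{2,j}\in(w^2)$ for $k+1\le j\le t$ while $P_{2,t+1}=\mu z^t w+O(w^2)$ with $\mu\ne0$; and by (2), $w\mid P_{1,j}$ for $k+1\le j\le r$ while $P_{1,r+1}=\lambda z^{r+1}+O(w)$ with $\lambda\ne0$. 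A linear rescaling of $z$ normalizes $\lambda=-1$, which fixes the positive real axis as the reference direction; the hypothesis that $[v]$ is transversally attracting is then exactly what makes $\re\mu<0$ after this rescaling. Hence on a sufficiently narrow sector $S=\{|\arg z|<\gamma\}$ one has $\re(-z^r)<0$ and $\re(\mu z^t)<0$, uniformly.

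Next I would propose the domain
\[
\Omega \;=\; \{\, (z,w) \;:\; \arg z\in S,\ 0<|z|<\varepsilon,\ |w|<\eta\,|z|^{\,r+1-k} \,\}
\]
— equivalently, a round neighbourhood of the origin in the weighted coordinate $\zeta=w/z^{\,r+1-k}$ — and show it is forward invariant provided $\eta$, and then $\varepsilon=\varepsilon(\eta)$, are small. The exponent $r+1-k$ is forced: the lowest-order $w$-dependent term of $f_1$, coming from $P_{1,k+1}\in(w)$, has size $\lesssim\eta|z|^{r+1}$ on $\Omega$, so it is absorbed into $\lambda z^{r+1}=-z^{r+1}$, giving $z_1=z\bigl(1-(1+O(\eta))z^r+O(|z|^{r+1})\bigr)$; standard Hakim-type estimates then show $S$ and $\{|z|<\varepsilon\}$ are preserved, $\arg z_n$ stays controlled, and $|z_n|\sim c\,n^{-1/r}\to0$. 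Simultaneously $w_1=w\cdot U(z)+O(|z|^{s+1})+(\text{smaller terms})$ with $U(z)=1+\mu z^t+O(|z|^{t+1})$, so $|w_1|\le|w|(1-c_1|z|^t)+C|z|^{s+1}$ on $S$; comparing with $|z_1|^{r+1-k}\approx|z|^{r+1-k}(1-c_2|z|^r)$, invariance of the constraint $|w|<\eta|z|^{r+1-k}$ reduces to $C|z|^{s+1}\lesssim c_1\eta\,|z|^{(r+1-k)+t}$, i.e.\ to $|z|^{(s-r)-(t-k)}\lesssim\eta$, which holds for $|z|<\varepsilon$ precisely because $s>r+t-k$ (trivially so when $s=\infty$). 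Finally, feeding $|z_n|\sim n^{-1/r}$ back into the $w$-recursion gives $|w_n|\lesssim|z_n|^{\,s-t+1}$, hence $w_n/z_n\to0$ and $(z_n,w_n)\to0$; transporting $\Omega$ back through the normalizing coordinates yields the claimed open domain of attraction converging to $p$ along $[v]$.

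The step I expect to be the main obstacle is precisely this simultaneous bootstrap. Keeping $\arg z_n$ inside $S$ needs $|w_n|$ already of size $\lesssim|z_n|^{r+1-k}$ (otherwise the uncontrolled $\arg w_n$ entering through the $P_{1,k+1}$ term perturbs $\arg z_n$ at the borderline rate $|z_n|^r$, whose sum diverges), while keeping $|w_n|$ that small needs $z_n$ already contracting; the two inequalities must be closed against each other with a single admissible pair $(\eta,\varepsilon)$. Carrying this through requires verifying that every remaining monomial — higher powers of $w$ in the $P_{1,j}$ and $P_{2,j}$, the $j>r+1$ terms of $f_1$, and the $j>s$ terms of $f_2$ — is of strictly lower order on $\Omega$, and this is exactly where (1)--(3) together with $k<r<s$, $k\le t\le r$, and $s>r+t-k$ all enter; the borderline case $t=r$ needs a little extra bookkeeping, as does confirming that "transversally attracting" genuinely delivers a common direction along which both $-z^r$ and $\mu z^t$ have negative real part.
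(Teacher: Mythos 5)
Your proposal follows essentially the same route as the paper: normalize to $[v]=[1:0]$, translate (1)--(3) into divisibility constraints on the homogeneous terms, rescale the $z^{r+1}$ coefficient, take a domain of the form $\{z\in\text{sector},\ |w|\lesssim|z|^{\,r-k+1}\}$, close the two invariance estimates using exactly $s>r+t-k$, and conclude $|z_n|\sim n^{-1/r}$ and $w_n/z_n\to 0$ (the paper's only cosmetic difference is using $|w|<|z|^{\mu}$ with an $\epsilon$-shifted exponent $\mu$ in place of your small constant $\eta$). The one point you flag but should not gloss over is that for $t=r$ ``transversally attracting'' is \emph{not} $\re\mu<0$: the denominator $|z_1|^{\,r-k+1}$ contributes a competing $z^r$ term, so the condition that must be positive is $\re\bigl(\beta-\tfrac{r-k+1}{r}\bigr)$ (the paper's modified director), exactly as your ``extra bookkeeping'' remark anticipates.
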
 

\begin{remx}Suppose $f$ satisfies conditions (1)-(3).  If $r\not\in\{t,2t\}$, then $[v]$ is transversally attracting.  If $r\in\{t,2t\}$, then $[v]$ might not be transversally attracting.  
In addition, $f^{-1}$ will also satisfy conditions (1)-(3) with the same $k,r,s,t$.  If $r=2t$, then $[v]$ must be transversally attracting for $f$ and/or $f^{-1}$.  See Definition~\ref{transv} and Lemma~\ref{f-1} for more details.
\end{remx}

\begin{remx}For a map that satisfies the conditions of Theorem~\hyperref[thmA]{A}, estimates on the rate at which points in its domain of attraction to $p$ converge are given in Proposition~\ref{size}.\end{remx}


Theorem~\hyperref[thmA]{A} is an extension of the following theorem due to Rong that, among other things, assumes $t=k$ and, as a consequence of its assumptions, the characteristic direction is apparent \cite{Ro2}.
\begin{thm*}[Rong, 2015]\label{Rothm}
Let $f$ be a germ of a holomorphic self-map of $\C^2$ that is  tangent to the identity at a fixed point $p$ and that is of order $k+1$.  Assume that $[v]$ is:
\begin{enumerate}
\item a degenerate characteristic direction of degree $k+1$, 
\item essentially non-degenerate\footnote{Assumptions (1)-(2) imply that $[v]$ must be: a  characteristic direction of degree $s$, of order one in degree $k+1$ (so $t=k$), and non-degenerate of degree $r+1$ for some $k<r<s$.}, and 
\item of order one in degree $k+1$. 
\end{enumerate}
If $[v]$ is transversally attracting, then there exists a domain of attraction whose points, under iteration by $f$, converge to $p$ along $[v]$.
\end{thm*}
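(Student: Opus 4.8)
The plan is to follow the classical Hakim-type scheme, adapted to the higher-degree setting: change coordinates so that $[v]$ is the direction $[1:0]$, blow up (or rather pass to the chart $(z,u)=(z,w/z^?)$ appropriate to the degrees involved), and exhibit an invariant region in the new coordinates on which one coordinate converges geometrically to $0$ while the other behaves like the orbit of a one-dimensional parabolic germ. Concretely, I would first write $f$ in coordinates $(z,w)$ with characteristic direction $[1:0]$ and expand $f$ in homogeneous pieces, isolating the leading term of order $k+1$, the term of degree $t+1$ (which by hypothesis (3) has order one along $[v]$, so contributes a nonzero linear-in-$w$ term after the substitution), and the term of degree $r+1$ (which by hypothesis (2) is non-degenerate, so contributes the genuine parabolic shift in the $z$-direction). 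The right weighted blow-up is $w = z^{?}\,u$ with the weight chosen so that the degree-$(t+1)$ contribution to the $u$-equation and the degree-$(r+1)$ contribution to the $z$-equation appear at the correct relative orders; the numerology $s > r+t-k$ is precisely what guarantees that the "error" coming from the degree-$s$ (and higher) terms is negligible compared to these two controlling terms.

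**Key steps, in order.** (i) Normalize coordinates and record the expansions of the two components of $f$, separating the controlling monomials from the remainder; quantify the remainder using $s>r+t-k$. (ii) Perform the weighted change of variables $w=z^{m}u$ for the appropriate integer/rational weight $m$ (determined by $k,r,t$), and compute the transformed map: the $z$-component should take the form $z_1 = z - (\text{const})\,z^{\,r-k+1}(1+o(1)) + \cdots$ and the $u$-component $u_1 = u\bigl(1 - (\text{const})\,z^{\,t-k}(1+o(1))\bigr) + \cdots$, where "transversally attracting" means the relevant constant in the $u$-equation has positive real part (after possibly a further rotation/rescaling), cf. Definition~\ref{transv}. (iii) Define a candidate basin of the form $\{\,|z|<\delta,\ |\arg z - \theta_0|<\eta,\ |u|<\rho\,\}$ for suitable $\delta,\eta,\rho$, a "parabolic sector" in $z$ times a disc in $u$. (iv) Show invariance: for $z$, this is the standard Leau–Fatou estimate showing $z_n\to 0$ like $n^{-1/(r-k)}$ and $\arg z_n$ stays in the sector (here I would pass to the coordinate $Z = z^{-(r-k)}$, where the map is $Z\mapsto Z + c + o(1)$); for $u$, I would show $|u_n|$ is decreasing, using that $\sum_n |z_n|^{\,t-k} = \sum_n n^{-(t-k)/(r-k)}$ — and the condition $k\le t\le r$ together with the case analysis must be arranged so this controls the $u$-orbit (when the series diverges, $|u_n|\to 0$; when it converges, one still gets $|u_n|$ bounded and then an extra argument forces convergence along $[v]$, or the hypothesis $s>r+t-k$ rescues it). (v) Conclude that $(z_n,u_n)\to(0,0)$, hence $f^n\to p$ along $[v]$, and read off the convergence rate for Proposition~\ref{size}.

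**The main obstacle** I anticipate is step (iv) for the transversal coordinate $u$ in the borderline regimes $r\in\{t,2t\}$, where (per the first Remark) $[v]$ need not even be transversally attracting and the naive estimate $|u_1|\le |u|(1-\re(c)|z|^{t-k}+\cdots)$ competes on the same scale with cross-terms. There the bookkeeping of which monomials land at which weighted order is delicate: one must verify that the substitution $w=z^m u$ does not promote a higher-degree term into a position where it dominates $z^{\,r-k}$ or $z^{\,t-k}$, and this is exactly where $s>r+t-k$ is used quantitatively. A secondary difficulty is choosing the weight $m$ and the sector opening $\eta$ compatibly: $\eta$ must be small enough that $\re(z_n^{-(r-k)})\to+\infty$ drives $z_n\to0$, yet the resulting constraint must be consistent with keeping the $u$-multiplier inside the unit disc uniformly. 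I would handle both by first treating the generic case $r\notin\{t,2t\}$ cleanly, then isolating the two exceptional values of $r$ and invoking the transversally-attracting hypothesis there explicitly, mirroring how the statement of Theorem~\hyperref[thmA]{A} singles them out.
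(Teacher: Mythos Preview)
Your plan is the paper's plan: move $[v]$ to $[1:0]$, put $f$ in the form \eqref{feq3}, take as domain a parabolic sector $V_{\delta,\theta}$ in $z$ intersected with $\{|w|<|z|^\mu\}$ (equivalently your blown-up disc $\{|u|<\rho\}$ with $u=w/z^\mu$), and verify invariance by estimating $|w_1|/|z_1|^\mu$ directly. So the architecture is right.

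The gap is that your exponents are systematically shifted by $k$, and this is not cosmetic --- it makes step~(iv) fail as written. ``Order $k+1$'' means the lowest homogeneous piece of $f-\Id$ has degree $k+1$; non-degeneracy in degree $r+1$ means the first pure-$z$ term in the first coordinate is $z^{r+1}$, so $z_1=z\bigl(1-\tfrac{1}{r}z^{r}+\cdots\bigr)$ and $z_n\sim n^{-1/r}$, not $z^{r-k+1}$ and $n^{-1/(r-k)}$. Likewise the transversal multiplier is $1-\beta z^{t}$, not $1-\beta z^{t-k}$. In Rong's setting $t=k$, so your formula would give a constant multiplier and geometric decay of $u_n$; the true multiplier is $1-\beta z_n^{k}\sim 1-\beta n^{-k/r}$, and $u_n\to0$ comes (in your product formulation) from the divergence of $\sum n^{-k/r}$ since $k<r$ --- this is exactly where $\re\Delta>0$ does work. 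The weight is not an integer $m$ but the real exponent $\mu$ of \eqref{eta}, tuned so that the cross-terms $\tfrac{w}{z}U$ and $wV$ land at orders $z^{r+\epsilon}$ and $z^{t+\epsilon}$; the hypothesis $s>r+t-k$ then enters only as $s+1\ge\mu+t+2\epsilon$ (Remark~\ref{smubdd}), controlling the $\lambda z^{s+1}w^{-1}$ term in \eqref{wbound2}. Finally, there is no case split on $r\in\{t,2t\}$ inside the invariance proof: a single constant $\hat\beta$ (see \eqref{betahat}) absorbs the $\tfrac{\mu}{r}z^r$ correction when $t=r$, and $\re\hat\beta>0$ is the transversally-attracting hypothesis, full stop.
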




The conditions of Theorem~\hyperref[thmA]{A} on $f$ are not invariant under conjugation by every biholomorphism that fixes the origin.  The following two results provide sufficient conditions on a biholomorphism, $\Psi$, to ensure invariance under conjugation by $\Psi$ of the conditions on $f$ given in Theorem~\hyperref[thmA]{A}.  In Theorem~\hyperref[thmB]{B}, we make assumptions on a biholomorphism in $\C^n$ that partially fixes a direction and we see how these assumptions translate to specific properties of the biholomorphism and its inverse.

\begin{thmB}
\label{thmB}
Let $\Psi$ be a biholomorphism of $\C^n$ that fixes the origin, $\0$.  Near the origin:
$$\Psi(z,w)=\sum_{j\geq1} \Psi_j(z,w),$$ 
where $(z,w)\in\C\times\C^{n-1}$ and $\Psi_j$ is homogeneous of degree $j$.  Let $[1:\0]:=[1:0:\cdots:0]\in\p^{n-1}(\C)$.  If $\Psi$ fixes $[v]\in\p^{n-1}(\C)$ up through degree $\sigma$, then:
\begin{enumerate}
\item each $\Psi_j$ fixes $[v]$ for all $j\leq\sigma$;
\item $\Psi^{-1}$ fixes $[v]$ up through degree $\sigma$; and
\item given an invertible linear map $L$ that fixes $\0$ and sends $[1:\0]$ to $[v]$, there is a holomorphism $\phi:\C\to\C^{n-1}$ and a biholomorphism $\chi:\C^{n}\to\C^{n}$ that fixes $\0$ and $[1:\0]$ such that:
$$L^{-1}\circ\Psi\circ L=\Phi\circ \chi,$$
where $\Phi=\Id+(0,z^{\sigma+1}\phi(z))$ is a biholomorphism.
\end{enumerate} 
 \end{thmB}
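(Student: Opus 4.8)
The plan is to work in coordinates adapted to $[v]$ and to extract from the hypothesis a single clean statement about the germ, from which all three conclusions follow. Fix an invertible linear $L$ as in (3) and set $\wt\Psi:=L^{-1}\circ\Psi\circ L$; since conjugation by a linear isomorphism is compatible with the notion of fixing a direction up through a given degree (see \S\ref{prelim}), $\wt\Psi$ fixes $[1:\0]$ up through degree $\sigma$, and a symmetric reduction applies to $\Psi^{-1}$ because $(L^{-1}\Psi L)^{-1}=L^{-1}\Psi^{-1}L$. Write $\wt\Psi=(P,Q)$ with $P\colon\C^n\to\C$ and $Q\colon\C^n\to\C^{n-1}$, and note that $\wt\Psi_1$ is block upper–triangular with a nonzero scalar $c$ on the $z$–block and an invertible matrix on the $w$–block. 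Unwinding the definition, ``$\wt\Psi$ fixes $[1:\0]$ up through degree $\sigma$'' is equivalent to $Q(z,\0)=O(z^{\sigma+1})$, i.e.\ $(\wt\Psi_j(1,\0))_w=0$ for all $1\le j\le\sigma$; reading off homogeneous components and conjugating back by $L$ gives conclusion (1).

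For (3), write $Q(z,\0)=z^{\sigma+1}\phi_0(z)$ with $\phi_0\colon\C\to\C^{n-1}$ holomorphic near $0$. Since $\zeta:=P(z,\0)=cz+O(z^2)$ with $c\ne0$, it is a local biholomorphism of $(\C,0)$; let $z(\zeta)$ denote its inverse and set
$$\phi(\zeta):=\Big(\frac{z(\zeta)}{\zeta}\Big)^{\sigma+1}\phi_0\big(z(\zeta)\big),$$
which is holomorphic near $0$ because $z(\zeta)/\zeta$ is a unit. Put $\Phi:=\Id+(0,z^{\sigma+1}\phi(z))$; this alters only the $w$–coordinate, so it is a biholomorphism with inverse $(z,w)\mapsto(z,\,w-z^{\sigma+1}\phi(z))$. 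Define $\chi:=\Phi^{-1}\circ\wt\Psi$, a biholomorphism fixing $\0$ (its linear part is $\wt\Psi_1$). Restricting to the $z$–axis,
$$\chi(z,\0)=\Phi^{-1}\big(P(z,\0),\,z^{\sigma+1}\phi_0(z)\big)=\big(P(z,\0),\,z^{\sigma+1}\phi_0(z)-P(z,\0)^{\sigma+1}\phi(P(z,\0))\big)=\big(P(z,\0),\,\0\big)$$
by the choice of $\phi$ (here $z(P(z,\0))=z$). Thus $\chi$ maps $\{w=\0\}$ into itself, hence fixes $[1:\0]$, and $\wt\Psi=\Phi\circ\chi$; conjugating by $L$ yields $L^{-1}\Psi L=\Phi\circ\chi$ with $\chi$ fixing $\0$ and $[1:\0]$, which is (3).

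Finally (2) drops out of this factorization: $\wt\Psi^{-1}=\chi^{-1}\circ\Phi^{-1}$, where $\Phi^{-1}=\Id+(0,-z^{\sigma+1}\phi(z))$ again affects only the $w$–coordinate and sends $(z,\0)$ to $(z,-z^{\sigma+1}\phi(z))$, while $\chi^{-1}$ maps $\{w=\0\}$ to itself, so its $w$–component vanishes on $\{w=\0\}$ and is therefore $O(w)$. Composing, the $w$–component of $\wt\Psi^{-1}(z,\0)$ is $O\big(z^{\sigma+1}\phi(z)\big)=O(z^{\sigma+1})$, so $\wt\Psi^{-1}$ fixes $[1:\0]$ up through degree $\sigma$; conjugating back gives (2). (Alternatively, (2) can be obtained directly by a doubling/bootstrap argument on the coefficient identities forced by $\Psi\circ\Psi^{-1}=\Id$.)

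The step requiring the most care is (3): one must verify that the definition of fixing $[v]$ up through degree $\sigma$ really reduces to the vanishing $Q(z,\0)=O(z^{\sigma+1})$ in adapted coordinates, that $\phi$ is a genuine holomorphic germ (which is exactly where $c\ne0$ is used), and that the resulting $\chi$ preserves the direction $[1:\0]$ and not merely the point $\0$; the bookkeeping of conjugation by $L$ throughout is then routine.
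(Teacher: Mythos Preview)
Your proof is correct and follows essentially the same route as the paper: reduce to $[1:\0]$ by conjugating with $L$, read off (1) from the vanishing $Q(z,\0)=O(z^{\sigma+1})$, and construct $\phi$ via the local inverse of $\zeta=P(z,\0)$ so that $\chi:=\Phi^{-1}\circ\wt\Psi$ preserves the $z$--axis. The only organizational difference is that you deduce (2) as a corollary of the factorization in (3), whereas the paper proves (2) directly from the identity $\Psi^{-1}\circ\Psi(z,\0)=(z,\0)$ before turning to (3); both orderings are valid, and your route is marginally more economical.
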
 

In the following proposition, we combine our assumptions on $f$ from Theorem~\hyperref[thmA]{A} with properties of biholomorphisms from Theorem~\hyperref[thmB]{B} to get conditions under which $k,t,r,s$ are invariant.


\begin{propC}
\label{propC} 
Let $f$ be a germ of a holomorphic self-map of $\C^2$ that is tangent to the identity at a fixed point $p$, is of order $k+1$, and satisfies conditions (1)-(3) of Theorem~\hyperref[thmA]{A} with characteristic direction $[v]$.  Suppose that $\Psi$ is a biholomorphism of $\C^2$ that fixes $p$ and, near $p$, fixes $[v]$ up through degree $\sigma\geq 1$.  Then $\Psi^{-1}\circ f\circ\Psi$ has $[v]$ as a characteristic direction and:
\begin{enumerate}
\item $t$ is invariant if $\sigma>t-k$,
\item $r$ is invariant if $\sigma>r-k$, and
\item $s$ is invariant if $\sigma>\max\{s-t,\frac{s-k}{2}\}$.
\end{enumerate}
Therefore, $k,t,r,s$ are invariant under conjugation by $\Psi$ if $\sigma>\max\left\{r-k,s-t,\frac{s-k}{2}\right\}$.  Consequently, if $s>r+t-k$, then $k,t,r,s$ are invariant under conjugation by $\Psi$ if $\sigma> s-t$.\end{propC}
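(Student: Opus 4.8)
The plan is to prove Proposition~\hyperref[propC]{C} by reducing the invariance of each of $t,r,s$ to a concrete normal-form computation, using Theorem~\hyperref[thmB]{B}(3) to replace the arbitrary $\Psi$ that fixes $[v]$ up through degree $\sigma$ by a composition $\Phi\circ\chi$ in suitable coordinates. First I would fix an invertible linear $L$ with $L(\0)=\0$ and $L[1:\0]=[v]$, and replace $f$ by $L^{-1}\circ f\circ L$ so that the characteristic direction becomes $[1:\0]$ and the weighted-homogeneous data defining $k$, $t$, $r$, $s$ take their standard coordinate form (this is a linear change, hence harmless for all four integers). By Theorem~\hyperref[thmB]{B}(3), $L^{-1}\circ\Psi\circ L=\Phi\circ\chi$, where $\chi$ fixes $[1:\0]$ and $\Phi=\Id+(0,z^{\sigma+1}\phi(z))$. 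Since $\chi$ fixes $[1:\0]$ \emph{exactly} (to all orders), conjugation by $\chi$ preserves each of $k,t,r,s$ — this is the case $\sigma=\infty$ and can be recorded as a lemma, or simply cited from the structure of the preliminaries. So the entire content reduces to understanding conjugation by the single, very explicit map $\Phi$, which perturbs only the $w$-coordinate and only in degree $\geq\sigma+1$.

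Next I would carry out the conjugation $\Phi^{-1}\circ g\circ\Phi$ for $g:=\chi^{-1}\circ L^{-1}\circ f\circ L\circ\chi$, writing $g=\Id+(P,Q)$ in the weighted-homogeneous expansion along $[1:\0]$. The key point is bookkeeping the weights: $\Phi$ and $\Phi^{-1}$ differ from the identity by a term of the form $(0,z^{\sigma+1}\psi(z))$, so every correction introduced in $\Phi^{-1}\circ g\circ\Phi$ has $w$-degree raised by at least $\sigma+1$ relative to the lowest-order terms of $g$. Concretely, the lowest-degree term governing $t$ lives in degree $t+1$, the one governing the order-one/non-degenerate structure in degree $r+1$, and the characteristic-direction vanishing persists through degree $s$; each correction coming from $\Phi$ sits in degree $\geq (\text{relevant degree})+\sigma-k$ or better. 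Requiring that these corrections not reach down to the degree that defines each invariant gives exactly $\sigma>t-k$ for $t$, $\sigma>r-k$ for $r$, and for $s$ the two thresholds $\sigma>s-t$ (from cross terms pairing the degree-$t$ part of $g$ against the $\Phi$-correction) and $\sigma>\frac{s-k}{2}$ (from the $\Phi$-correction interacting with itself, which doubles the excess over $k$). Taking the maximum yields the combined statement, and the final ``Consequently'' clause is pure arithmetic: if $s>r+t-k$ then $s-t>r-k$ and $s-t>\frac{s-k}{2}$ (the latter since $2(s-t)>s-k\iff s>2t-k$, which follows from $s>r+t-k\ge t+t-k$ as $r\ge t$), so $\sigma>s-t$ alone suffices.

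I expect the main obstacle to be the precise weight accounting in the third item, the invariance of $s$. Unlike $t$ and $r$, which are each read off from a single homogeneous stratum and therefore only see linear-in-$\sigma$ corrections, the degree-$s$ condition is a statement about the vanishing of an entire family of strata (the direction $[v]$ remaining characteristic up through degree $s$), so one must track both the first-order effect of $\Phi$ (a single insertion of the $z^{\sigma+1}\phi(z)$ term, contributing excess $\sigma-k$ above the base degree, hence the threshold $\sigma>s-t$ after pairing with the degree-$t+1$ data) and the second-order effect (two insertions, contributing excess $2(\sigma-k)$, hence $2(\sigma-k)>s-k$, i.e. $\sigma>\frac{s-k}{2}$). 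Ensuring that no \emph{higher}-order effect of $\Phi$ produces a stronger constraint — it cannot, because three or more insertions only increase the excess — is the part that needs a careful, but routine, induction on the number of $\Phi$-factors. Once that is in place, items (1) and (2) are immediate specializations of the same computation restricted to the single relevant stratum, and the proposition follows.
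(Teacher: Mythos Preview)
Your proposal is correct and follows essentially the same route as the paper: reduce to $[v]=[1:\0]$ by a linear map, invoke Theorem~\hyperref[thmB]{B}(3) to factor $\Psi=\Phi\circ\chi$, check that conjugation by $\chi$ (which fixes $[1:\0]$ exactly) preserves $t,r,s$, and then isolate the effect of the explicit shear $\Phi=\Id+(0,z^{\sigma+1}\phi(z))$ to read off the three thresholds. The only difference is one of presentation: the paper carries out the $\chi$- and $\Phi$-conjugations by direct computation in the normal form \eqref{feq0}, whereas you describe the $\Phi$-step in terms of counting ``insertions'' of the $z^{\sigma+1}$ correction; both yield the same bounds $\sigma>t-k$, $\sigma>r-k$, and $\sigma>\max\{s-t,\tfrac{s-k}{2}\}$, and your closing arithmetic for the ``Consequently'' clause matches the paper's.
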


From Proposition~\hyperref[propC]{C}, we see that a map $f$ as in Theorem~\hyperref[thmA]{A} is invariant under conjugation by any biholomorphism that fixes $p$ and $[v]$ (up through degree $s-t+1$).  However, if the biholomorphism moves $[v]$ in its terms of degree less than $s-t+1$, conjugating $f$ by that biholomorphism can change the relative sizes of $t,r,s$.  Going the other direction, we see that we can sometimes conjugate a map that does not satisfy the conditions of Theorem~\hyperref[thmA]{A} to one that does, hence showing the existence of a domain of attraction to $p$ along $[v]$ for a wider class of maps.\\

In the study of the local dynamics of holomorphic self-maps of $\C^n$ that fix a point and are tangent to the identity, generalizing the concepts behind the well-known Leau-Fatou Flower Theorem in $\C$ has been a driving force (see \cite{CG, M}).  Two of the main objects that arise in the Leau-Fatou Flower Theorem are: (1) attracting directions, which correspond to real lines, and (2)  domains of attraction.  The idea of attracting directions in $\C$ is generalized to characteristic directions in higher dimensions, which correspond to complex lines.  The idea of invariant attracting petals in $\C$ is generalized to $\C^n$ in two main ways: parabolic curves and invariant domains of attraction.  In this paper, we focus on the latter.  \\

In higher dimensions, there are several results that work towards classifying germs of holomorphic self-maps of $\C^n$ that are tangent to the identity at a fixed point based on the existence (or non-existence) of a domain of attraction along a characteristic direction.  In $\C^2$, characteristic directions for non-dicritical maps are split into three types: Fuchsian, irregular, and apparent.  In the Fuchsian case, Hakim, Vivas, Rong, and the author 
independently showed sufficient conditions for the existence (or non-existence) of a domain of attraction along a Fuchsian characteristic direction (see \cite{H1,H2,L1,Ro1,V2}).  In the irregular case, Vivas and the author (for a unique non-degenerate characteristic direction) independently showed that there always is a domain of attraction along an irregular characteristic direction (see \cite{L1,L2,V2}).  In the apparent case, Rong and Vivas independently showed sufficient conditions for the existence (or non-existence) of a domain of attraction along an apparent characteristic direction (see \cite{Ro2,V2}).  Theorem~\hyperref[thmA]{A} extends what is known about the existence of a domain of attraction when the direction is Fuchsian, apparent, or dicritical.  Refer to \S\ref{summary} for a brief summary of results in $\C^2$ on the existence of a domain of attraction whose points converge along a given characteristic directions.  In higher dimensions ($\C^n,n\geq 3$), there are also results on the existence (and non-existence) of a domain of attraction along a characteristic direction, however this has been studied less extensively and is not discussed in this paper (see \cite{AR,H1,H2,SV}).   \\

This paper is organized in the following way.  In \S\ref{prelim}, we introduce the main definitions that will be used throughout this paper.  In \S\ref{BiholoSec} we prove some properties about biholomorphisms, their inverses, and how they affect maps tangent to the identity under conjugation.  The main results we prove in this section are Theorem~\hyperref[thmB]{B} and Proposition~\hyperref[propC]{C}.  We will use these results to see when conditions (1)-(3) of Theorem~\hyperref[thmA]{A} are invariant.  We will also see that by choosing an appropriate biholomorphism we may be able to conjugate a map that does not satisfy conditions (1)-(3) of Theorem~\hyperref[thmA]{A} to one that does.  In \S\ref{setup} we see how conditions (1)-(3) of Theorem~\hyperref[thmA]{A} affect what form $f$ and, consequently, $f^{-1}$ must have and we introduce definitions specific to this set-up.  In \S\ref{proof} we use tools from the previous sections to help prove Theorem~\hyperref[thmA]{A}.  For maps that satisfy the conditions of Theorem~\hyperref[thmA]{A}, we show the rate at which points in the domain of attraction converge (see Proposition~\ref{size}).  Lastly, in \S\ref{summary}, we summarize known results in $\C^2$ on the existence of a domain of attraction whose points converge along a characteristic direction based on properties of the map and characteristic direction.  We include how Theorem~\hyperref[thmA]{A} contributes to this classification.

\subsection*{Acknowledgements}
The author would like to thank Laura DeMarco for useful conversations about this paper.  The author would also like to thank Mattias Jonsson, Liz Vivas, and Marco Abate for comments on an earlier draft. 

\section{Preliminaries} 
\label{prelim}
\begin{Def}A germ of a holomorphic self-map of $\C^n$ that fixes $p\in\C^n$ is \textit{tangent to the identity} at $p$ if the Jacobian $df_p$ is the identity matrix.  Let $\TId(\C^n,p)$ be the set of all such maps.  \end{Def}

Assume, without loss of generality, that $p$ is the origin, $O$, as we can move $p$ to $O$ via conjugation by a linear map.  Near the origin, every $f\in\TId(\C^n,\0)$ such that $f\not\equiv\Id$ can be written as:\begin{equation}\label{standardf}
f(z)=z+P_{k+1}(z)+P_{k+2}(z)+\cdots,\end{equation}
where $P_j:=(p_j,q_j)$ is a homogeneous polynomial of degree $j$ and $P_{k+1}\not\equiv 0$ for some integer $k>0$.
\begin{Def} The \textit{order} of $f\in\TId(\C^n,\0)$ is $k+1$.\end{Def}

\begin{Def}Let $Q:\C^n\to\C^n$ be a homogeneous polynomial and suppose that $Q(v)=\lambda v$ for $v\in\C^n\setminus\{O\}$ and $\lambda\in\C$.  The projection of $v$ to $[v]\in\p^{n-1}(\C)$ is a \textit{characteristic direction}; $[v]$ is \textit{degenerate} if $\lambda=0$ and \textit{non-degenerate} if $\lambda\neq 0$.  \end{Def}

\begin{Def}Let $f\in\TId(\C^n,\0)$ have order $k+1$ with homogeneous expansion as in \eqref{standardf}.  Then $[v]\in\p^{n-1}(\C)$ is a \textit{characteristic direction of degree $s$} if it is a characteristic direction of $P_{k+1},\ldots,P_{s}$, where $s\geq k+1$.  In addition, $[v]$ is \textit{non-degenerate in degree $r+1$} if it is degenerate for $P_{k+1},\ldots,P_r$ and non-degenerate for $P_{r+1}$, where $r+1>k+1$.  In accordance with previous terminology, a \textit{characteristic direction} is a characteristic direction of degree $k+1$.\end{Def}

This definition extends the standard definition of characteristic direction, which was previously reserved for degree $k+1$.  For $f\in\TId(\C^n,\0)$ of order $k+1$ with characteristic direction $[v]$, previous results on the existence of a domain of attraction to $O$ along $[v]$ have relied heavily upon properties of $P_{k+1}$.  In Theorem~\hyperref[thmA]{A}, we rely more on higher degree terms of $f$ (i.e., that $[v]$ is characteristic direction of degree $s$) to show that, given some additional assumptions, a domain of attraction to $O$ along $[v]$ exists. \\

Suppose $f\in\TId(\C^2,\0)$ is of order $k+1$.  A characteristic direction of degree $s\geq k+1$ is sent to a characteristic direction of degree $s$ under conjugation by any biholomorphism $\Psi$ that fixes $O$ and, near $O$, is of the form $\Psi(z,w)=L(z,w)+\BO\left((z,w)^{s-k+1}\right)$, where  $L$ is linear.  In addition, a characteristic direction $[v]$ of degree $s$ is preserved under conjugation by any biholomorphism that fixes $O$ and $[v]$ (see Lemma \ref{invdir}).  \\

For the rest of this paper, we restrict to dimension two and maps $f\in\TId(\C^2,\0)$ of order $k+1$ with characteristic direction $[v]$ at $O$.  Without loss of generality, we also assume that $[v]=[1:0]$. 

\begin{Def}Let $r_j(z,w):=z q_j(z,w)-w p_j(z,w)$, where $P_j:=(p_j,q_j)$.  Let $m_j,l_j$, and $n_j$ be the orders of vanishing of $p_j(1,u), q_j(1,u)$, and  $r_j(1,u)$ at $u=0$, respectively.  \end{Def}

\begin{rem}\label{mln}
Note that $[a:b]$ is a characteristic direction of $P_j$ if and only if $r_j(a,b)=0$.  When $j=k+1$, we simplify the notation so that $m:=m_{k+1},l:=l_{k+1}$, and $n:=n_{k+1}.$  
\end{rem}

We can now explain the distinction between characteristic directions introduced by Abate and Tovena in \cite{AT} and discussed further in \cite{L1,V2}.  In this paper, we will mostly use these distinctions to explain previous results, summarized in \S\ref{summary}, since Theorem~\hyperref[thmA]{A} applies to all of these types of characteristic directions.  The relative size of the orders of vanishing ($m,l,n$) play an important role in the existence of a domain of attraction along the direction $[1:0]$; we use these orders to distinguish between different types of characteristic directions.  We use the relative orders instead of the precise orders $m,l,n$ since a linear change of coordinates that fixes $O$ and $[1:0]$ may change $m, l, n$, but it does not change the relative size of $1+m$ and $n$.

\begin{Def} \label{chardirtypes}
The origin is \textit{dicritical} when $n=\infty$, in which case all directions are characteristic directions.  When the origin is non-dicritical, the characteristic direction $[1:0]$ is:
\begin{enumerate}
\item \textit{Fuchsian} if $1+m=n$
\item \textit{irregular} if $1+m<n$, or
\item \textit{apparent} if $1+m>n>0$. 
\end{enumerate}
\end{Def}

\begin{rem}The direction $[1:0]$ is a characteristic direction of $f$ if and only if $n>0$.  Furthermore, $[v]$ is a non-degenerate characteristic direction of $f$ if and only if $m=0<n$.  The definitions of Fuchsian, irregular, and apparent are invariant under any holomorphic change of coordinates, $\Phi=L+\BO\left((z,w)^2\right)$, whose linear term, $L$, fixes $O$ and $[1:0]$.  
\end{rem}

We will use the following definition in the summary of results in $\C^2$ (see \S\ref{summary}).
\begin{Def}\label{director}
Suppose $[v]=[1:u_0]$ is a characteristic direction of $f$.  \textit{Abate's index} of $[v]$ is $\displaystyle\underset{u=u_0}{\mathrm{Res}}\left(\frac{p_{k+1}(1,u)}{r_{k+1}(1,u)}\right)$.  If $[v]$ is non-degenerate, the \textit{director} of $[v]$ is $\displaystyle\frac{1}{k}\left(\frac{r_{k+1}'(1,u)}{p_{k+1}(1,u)}\right)\bigg|_{u=u_0}$.\end{Def}

We introduce the following definition, which is used in Theorem~\hyperref[thmA]{A}.

\begin{Def}\label{degone}
$[1:0]$ is of \textit{order one in degree $t+1$} if $l_{t+1}=1<l_j$ for all $k+1\leq j\leq t$.   \end{Def}

Equivalently, $[1:0]$ is of \textit{order one in degree $t+1$} $\Leftrightarrow$ $w | q_{t+1}(z,w), w^2\not|q_{t+1}(z,w)$ and $w^2 | q_j(z,w)$ for all $k+1\leq j\leq t$.  Note that $[1:0]$ is a characteristic direction of $P_j~\Leftrightarrow ~l_j\geq 1~\Leftrightarrow~n_j\geq1$.  If $[1:0]$ is of order one in degree $t+1$, then $[1:0]$ is a characteristic direction of degree at least $t+1$.

\section{Biholomorphisms}\label{BiholoSec}

In this section we discuss properties of biholomorphisms that fix a point.  We focus on biholomorphisms of the form $\Psi\in\TId(\C^n,\0)$ since, by composing with a linear isomorphism, we can always move the fixed point to the origin and make the linear term the identity.  We show how some properties of a biholomorphism translate to its inverse as well as how properties of a biholomorphism affect a map $f\in\TId(\C^n,\0)$ under conjugation.  \\

The following lemma is an extension of \cite[Lemma 1.1]{A1} from order 2 to order $\tau\geq 2$.
\begin{lem}\label{inverse} Let $\Psi\in\TId(\C^n,\0)$ be of order $\tau$.  Near the origin, 
$$\Psi(z)=z+\sum_{j\geq\tau}A_j(z)\qquad\mbox{ and }\qquad\Psi^{-1}(z)=z+\sum_{j\geq\tau}B_j(z),$$
where $A_j=(A_j^{1},\ldots,A_j^{n})$ and $B_j=(B_j^{1},\ldots ,B_j^{n})$ are homogeneous of degree $j$, and $A_\tau\not\equiv O$.  Let $z=(z^1,\ldots,z^n)$.  Then:
$$B_j=-A_j,~\forall j\leq 2(\tau-1)\quad\mbox{ and }\quad
B_{2\tau-1}=-A_{2\tau-1}+\sum_{l=1}^n A_\tau^l \frac{\partial A_\tau}{\partial z^l}.$$
\end{lem}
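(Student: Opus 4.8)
The plan is to compute $\Psi^{-1}$ from the defining relation $\Psi^{-1}\circ\Psi = \Id$ by comparing homogeneous components degree by degree. Write $\Psi(z)=z+\sum_{j\ge\tau}A_j(z)$ and $\Psi^{-1}(z)=z+\sum_{j\ge\tau}B_j(z)$, substitute, and collect terms of each degree. For a degree $d$ with $\tau\le d\le 2(\tau-1)$, the only contributions to the degree-$d$ part of $\Psi^{-1}(\Psi(z))$ are: the linear term applied to $A_d(z)$ (giving $A_d(z)$), the term $B_d(z)$ from $\Psi^{-1}$ evaluated at the leading $z$, and terms of the form $B_j$ composed with the $A$'s, which are homogeneous of degree at least $j\cdot\tau\ge\tau^2$. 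Since $\tau\ge 2$ forces $\tau^2\ge 2\tau>2(\tau-1)$, those mixed terms are too high-degree to matter in this range, so matching degree $d$ gives $A_d(z)+B_d(z)=0$, i.e. $B_d=-A_d$ for all $d\le 2(\tau-1)$.

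For the degree $2\tau-1$ component I would Taylor-expand $B_j(\Psi(z)) = B_j\bigl(z+\sum_{i\ge\tau}A_i(z)\bigr)$. The lowest-degree correction beyond $B_j(z)$ is $\sum_{l=1}^n A_\tau^l(z)\,\tfrac{\partial B_j}{\partial z^l}(z)$, which for $j=\tau$ is homogeneous of degree $\tau+(\tau-1)=2\tau-1$; every other nonlinear contribution (larger $j$, or higher-order terms in the expansion of a single $B_j$) has degree strictly greater than $2\tau-1$. Hence the degree-$(2\tau-1)$ part of $\Psi^{-1}(\Psi(z))=z$ reads
$$A_{2\tau-1}(z)+B_{2\tau-1}(z)+\sum_{l=1}^n A_\tau^l(z)\,\frac{\partial B_\tau}{\partial z^l}(z)=0.$$
Substituting $B_\tau=-A_\tau$ (already established, since $\tau\le 2(\tau-1)$ when $\tau\ge 2$) turns the sum into $-\sum_l A_\tau^l\,\partial A_\tau/\partial z^l$, and solving for $B_{2\tau-1}$ yields exactly the claimed formula.

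The only real bookkeeping obstacle is making the degree-counting rigorous: one must verify carefully that in the range $d\le 2\tau-1$ no composition term other than the single first-order correction $\sum_l A_\tau^l\,\partial B_\tau/\partial z^l$ survives, i.e. that for any multi-index of $A$'s of total degree $e$ plugged into a $B_j$, the resulting degree $j + (e-1)\cdot(\text{min extra degree})$ exceeds $2\tau-1$ unless we are in the identified case. This is where the hypothesis $\tau\ge 2$ is used essentially (for $\tau=1$ the mixed terms would already interfere at low degree), and it is a routine but slightly delicate estimate; everything else is formal manipulation of power series. Symmetry in $A$ and $B$ (the same argument applied to $\Psi\circ\Psi^{-1}=\Id$) can be used as a sanity check but is not needed for the statement.
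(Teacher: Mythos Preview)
Your proof is correct and follows essentially the same approach as the paper: Taylor-expand the composition and read off homogeneous components up to degree $2\tau-1$ (the paper expands $\Psi\circ\Psi^{-1}=\Id$ rather than your $\Psi^{-1}\circ\Psi=\Id$, a purely cosmetic difference that yields the same cross-term after substituting $B_\tau=-A_\tau$). One minor slip in your bookkeeping: the correction terms in $B_j(\Psi(z))$ beyond $B_j(z)$ have degree at least $j+\tau-1$, not $j\cdot\tau$ as you wrote, but since $j+\tau-1\ge 2\tau-1>2(\tau-1)$ for $j\ge\tau$ your conclusion is unaffected.
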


\begin{proof}Since $\Psi\in\TId(\C^n,\0)$ is of order $\tau$, clearly $\Psi^{-1}\in\TId(\C^n,\0)$ is also of order $\tau$.  We use Taylor series to approximate terms in $\Psi^{-1}$; in particular, we use the formula:
$$A_j(z+\zeta)=A_j(z)+\sum_{l=1}^n \zeta^l \frac{\partial A_j}{\partial z^l}(z)+\BO(||\zeta||^2),$$
where $\zeta=(\zeta^1,\ldots,\zeta^n)$.  Then: 
\begin{align*}
z&= \Psi\circ\Psi^{-1}(z)
 =z+\sum_{j\geq \tau}B_j(z)+\sum_{j\geq \tau}A_j\Big(z+\sum_{t\geq \tau}B_t(z)\Big) \\
& =  z+\sum_{j\geq\tau}B_j(z)+\sum_{j\geq \tau}
   \left(A_j(z)+\sum_{l=1}^n \Big(\sum_{t\geq \tau}B_t^l(z) \Big) \frac{\partial A_j}{\partial z^l}(z)+\BO\Big(\Big|\Big|\sum_{t\geq \tau}B_t(z)\Big|\Big|^2\Big)\right) \\
   &=z+\sum_{j=\tau}^{2\tau-1}\left(B_j(z)+A_j(z)\right)
    +\sum_{l=1}^n\Big(B_\tau^l(z)\frac{\partial A_\tau}{\partial z^l}(z) \Big) 
     +\BO(||z||^{2\tau})
\end{align*}
Therefore, $B_j=-A_j$ for $j<2\tau-1$ and $\displaystyle B_{2\tau-1}=-A_{2\tau-1}+\sum_{l=1}^n A_\tau^l\frac{\partial A_\tau}{\partial z^l}$.  
\end{proof}

\begin{thmB}
Let $\Psi$ be a biholomorphism of $\C^n$ that fixes the origin, $\0$.  Near the origin:
$$\Psi(z,w)=\sum_{j\geq1} \Psi_j(z,w),$$ 
where $(z,w)\in\C\times\C^{n-1}$ and $\Psi_j$ is homogeneous of degree $j$.  Let $[1:\0]:=[1:0:\cdots:0]\in\p^{n-1}(\C)$.  If $\Psi$ fixes $[v]\in\p^{n-1}(\C)$ up through degree $\sigma$, then:
\begin{enumerate}
\item each $\Psi_j$ fixes $[v]$ for all $j\leq\sigma$;
\item $\Psi^{-1}$ fixes $[v]$ up through degree $\sigma$; and
\item given an invertible linear map $L$ that fixes $\0$ and sends $[1:\0]$ to $[v]$, there is a holomorphism $\phi:\C\to\C^{n-1}$ and a biholomorphism $\chi:\C^{n}\to\C^{n}$ that fixes $\0$ and $[1:\0]$ such that:
$$L^{-1}\circ\Psi\circ L=\Phi\circ \chi,$$
where $\Phi=\Id+(0,z^{\sigma+1}\phi(z))$ is a biholomorphism.
\end{enumerate} 
 \end{thmB}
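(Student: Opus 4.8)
The plan is to reduce to the case $[v]=[1:\0]$, which is equivalent to saying that the $\C^{n-1}$-valued part $h$ of $\Psi$ satisfies $h(z,\0)=\BO(z^{\sigma+1})$; from this I would read off (1), build the factorization in (3) by an explicit shear, and deduce (2) from (3). Concretely, choose a linear isomorphism $L$ sending $[1:\0]$ to $[v]$ and pass to $\wt\Psi:=L^{-1}\circ\Psi\circ L$, a biholomorphism fixing $\0$ and $[1:\0]$ up through degree $\sigma$, with homogeneous parts $\wt\Psi_j=L^{-1}\circ\Psi_j\circ L$; write $\wt\Psi=(g,h)$ and $\wt\Psi_j=(g_j,h_j)$, with $g,g_j$ scalar-valued and $h,h_j$ valued in $\C^{n-1}$. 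Since $h(z,\0)=\sum_{j\ge1}h_j(z,\0)$ and each $h_j(z,\0)$ is a fixed vector of $\C^{n-1}$ times $z^j$, the hypothesis $h(z,\0)=\BO(z^{\sigma+1})$ forces $h_j(z,\0)\equiv\0$ for all $j\le\sigma$. Then $\wt\Psi_j$ maps $\{w=\0\}$ into itself, so $[1:\0]$ is a characteristic direction of $\wt\Psi_j$, hence $\Psi_j$ fixes $[v]$, for all $j\le\sigma$: that is (1). In particular, with $j=1$ and $\sigma\ge1$, the linear part of $\wt\Psi$ preserves $[1:\0]$, so $g(z,\0)=\lambda z+\BO(z^2)$ with $\lambda\ne0$.

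For (3), fix $L$ as above and set $\gamma(z):=\wt\Psi(z,\0)=(\gamma_1(z),\gamma_2(z))$, so $\gamma_1=g(\cdot,\0)$ is an invertible germ at $0$ and $\gamma_2=h(\cdot,\0)$ vanishes to order $\ge\sigma+1$. Let $\eta:=(\gamma_1)^{-1}$ and define $\phi\colon\C\to\C^{n-1}$ by $\gamma_2(\eta(z))=z^{\sigma+1}\phi(z)$ (permissible because $\gamma_2\circ\eta$ vanishes to order $\ge\sigma+1$). Put $\Phi:=\Id+(0,z^{\sigma+1}\phi(z))$, so $\Phi^{-1}(z,w)=(z,w-z^{\sigma+1}\phi(z))$, and set $\chi:=\Phi^{-1}\circ\wt\Psi$. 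Then $\chi$ fixes $\0$, and along $\{w=\0\}$,
\[\chi(z,\0)=\Phi^{-1}\big(\gamma_1(z),\gamma_2(z)\big)=\big(\gamma_1(z),\,\gamma_2(z)-\gamma_1(z)^{\sigma+1}\phi(\gamma_1(z))\big)=(\gamma_1(z),\0),\]
because $\gamma_1(z)^{\sigma+1}\phi(\gamma_1(z))=\gamma_2\big(\eta(\gamma_1(z))\big)=\gamma_2(z)$. Thus $\chi$ maps $\{w=\0\}$ into itself, i.e.\ $\chi$ fixes $[1:\0]$; and since $\Phi\in\TId(\C^n,\0)$, the linear part of $\chi$ coincides with that of $\wt\Psi$, consistently. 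Hence $L^{-1}\circ\Psi\circ L=\wt\Psi=\Phi\circ\chi$, which is (3).

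Statement (2) then follows: from (3), $L^{-1}\circ\Psi^{-1}\circ L=(\wt\Psi)^{-1}=\chi^{-1}\circ\Phi^{-1}$. Here $\Phi^{-1}$ fixes $[1:\0]$ up through degree $\sigma$, since its $\C^{n-1}$-valued part along $\{w=\0\}$ is $-z^{\sigma+1}\phi(z)=\BO(z^{\sigma+1})$; and $\chi^{-1}$ fixes $[1:\0]$ exactly, because the inverse of a germ of biholomorphism mapping $\{w=\0\}$ into itself does likewise. So the $\C^{n-1}$-valued part of $\chi^{-1}$ vanishes on $\{w=\0\}$, meaning every monomial in it carries a positive power of $w$; substituting $w=-z^{\sigma+1}\phi(z)=\BO(z^{\sigma+1})$ shows that the $\C^{n-1}$-valued part of $\chi^{-1}\circ\Phi^{-1}$ along $\{w=\0\}$ is $\BO(z^{\sigma+1})$. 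Therefore $L^{-1}\circ\Psi^{-1}\circ L$, and hence $\Psi^{-1}$, fixes $[v]$ up through degree $\sigma$. (Alternatively, one can prove (2) directly by comparing lowest-order terms in the identity $h\big(\Psi^{-1}(z,\0)\big)=\0$, using that the $w$-linear part of $\Psi$ does not depend on $z$; routing through (3) is shorter.)

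The only step carrying real content is (3): the idea is to reparametrize the image curve $\Psi(\{w=\0\})$ by its first coordinate through $\eta$ and then pick the shear $\Phi$ so as to cancel exactly the transverse displacement $\gamma_2\circ\eta$; with $\Phi$ chosen this way the verification that $\chi=\Phi^{-1}\circ\wt\Psi$ fixes $[1:\0]$ is immediate, and (1) and (2) become soft. The one bookkeeping caveat is that $\phi$ — and hence $\Phi$ and $\chi$ — are a priori germs at the origin, which is harmless and matches the convention used elsewhere in this section.
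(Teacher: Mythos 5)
Your proof is correct and follows essentially the same route as the paper's: the same truncation argument for part (1), and for part (3) the same shear $\Phi=\Id+(0,z^{\sigma+1}\phi(z))$ with $\phi$ obtained by reparametrizing the image of $\{w=\0\}$ through the inverse of the first coordinate of $\Psi(\cdot,\0)$, and $\chi=\Phi^{-1}\circ\Psi$. The only (harmless) difference is that you deduce (2) from the factorization in (3) via $\chi^{-1}\circ\Phi^{-1}$, whereas the paper proves (2) directly from the identity $\Psi^{-1}\circ\Psi(z,\0)=(z,\0)$ before establishing (3).
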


\begin{proof}
Without loss of generality, we move $[v]$ to $[1:\0]$ via a linear change of coordinates, $L$.  We say that $\Psi$ fixes $[1:\0]$ up through degree $\sigma$ if we can write
$\Psi(z,w)=\widetilde{\Psi}(z,w)+\BO\left( (z,w)^{\sigma+1}\right)$, where $\widetilde{\Psi}=\sum_{j=1}^{\sigma}\Psi_j$ fixes $\0$ and $[1:\0]$.  Then, for all $z\in\C$ near $0$:
$$\widetilde{\Psi}(z,\0)=\sum_{j=1}^{\sigma}\Psi_j(z,\0)=\sum_{j=1}^{\sigma}(a_j,b_j) z^j=\lambda_z (1,\0),$$
where $(a_j,b_j):=\Psi_j(1,\0)\in\C\times\C^{n-1}$ and $\lambda_z\in\C$ is a constant.  Then $\sum_{j=1}^\sigma b_j z^j=\0$ for all $z\in\C$ near $0$, which implies that $b_j=\0$ for all $j\leq\sigma$.  Hence, for all $j\leq\sigma$, each $\Psi_j$ fixes $[1:\0]$. \\

Now we show that $\Psi^{-1}$ fixes $[1:\0]$ up through degree $\sigma.$  We can express $\Psi$ as:
\begin{equation}\label{biholo}
\Psi(z,w)=\big(A_1(z,w),A_2(z,w)+z^{\sigma+1} A_3(z)\big)\in\C\times\C^{n-1}\end{equation}
for $(z,w)\in\C\times\C^{n-1}$, where $A_1,A_2,A_3$ are holomorphic and $A_2(z,\0)=\0$.  Let:
$$\Psi^{-1}(z,w)=(B_1(z,w),B_2(z,w)+\hat{B}(z))\in\C\times\C^{n-1},$$ 
where $B_1,B_2,\hat{B}$ are holomorphisms such that $B_2(z,\0)=\0$ and $\hat{B}(0)=\0$.  We now prove \textit{(2)} by showing that $\hat{B}(z)=\BO(z^{\sigma+1})$.  Since $\Psi$ is a biholomorphism that fixes $\0$ and $[1:\0]$, $A_1(z,\0)$ must have a non-zero linear term and we can write:
\begin{equation}\label{A1}
A_1(z,w):=z\alpha_1(z)+\alpha_2(z,w):=\eta(z)+\alpha_2(z,w),\end{equation}
where $\alpha_2(z,\0)=0$, $\alpha_1(0)\neq0$, and $\eta$ has an inverse around $0$.  We use that $\alpha_1(0)\neq0$ and that:
$$(z,\0)=\Psi^{-1}\circ\Psi(z,\0)=
\left(B_1\circ\Psi(z,\0),  
B_2\left(A_1(z,\0),z^{\sigma+1}A_3(z)\right)+\hat{B}\left( A_1(z,\0)\right)\right)$$
to see that:
$$\0=B_2\left(A_1(z,0),z^{\sigma+1}A_3(z)\right)+\hat{B}\left( A_1(z,\0)\right)=
\BO(z^{\sigma+1})+\hat{B}(z\alpha_1(z)).$$
Hence, $\hat{B}(z)=\BO(z^{\sigma+1})$.  Therefore $\Psi^{-1}$ fixes $[1:0]$ through degree $\sigma$ and we can express $\Psi^{-1}$ as:
\begin{equation}\label{biholo2}
\Psi^{-1}(z,w)=\big(B_1(z,w),B_2(z,w)+z^{\sigma+1}B_3(z)\big),\end{equation}
where $B_1,B_2,B_3$ are holomorphisms that depend on $\Psi$ and $B_2(z,\0)=\0$. \\

Finally, we show that $\Psi$ can be rewritten as the composition of two functions:  
$$\chi(z,w)=(A_1(z,w),\hat{A}_2(z,w))\quad\mbox{and}\quad
\Phi(z,w)=\Id+(0,z^{\sigma+1}\phi(z)),$$
where $\hat{A}_2(z,\0)=\0$ since $\chi$ fixes $[1:\0]$.  We will define $\hat{A}_2$ and $\phi$ so that $\Phi\circ\chi-\Psi=\0:$  
\begin{equation}\label{lemmaeq}
\Phi\circ\chi(z,w)-\Psi(z,w) =\left(0,\hat{A}_2(z,w)+A_1(z,w)^{\sigma+1}\phi(A_1(z,w))-A_2(z,w)-z^{\sigma+1}A_3(z)\right).\end{equation}
Using \eqref{A1} and $w=\0$ in \eqref{lemmaeq} we see that: 
\begin{equation}\label{zero}
\Phi\circ\chi(z,\0)-\psi(z,\0)=\left(0,
\eta(z)^{\sigma+1}\phi(\eta(z))-z^{\sigma+1}A_3(z)\right).
\end{equation}
We define $\phi(z)$ so that \eqref{zero} equals $\0$.  In particular,
\begin{equation}\label{phi}
\phi(z):=\left(\frac{\eta^{-1}(z)}{z}\right)^{\sigma+1}A_3(\eta^{-1}(z)).
\end{equation}
Since $\eta(z)=z\alpha_1(z)$ is holomorphic and invertible around $0$ with $\alpha_1(0)\neq 0$ and $\eta(0)=0$, we know that $\eta^{-1}(z)=z\beta_1(z)$ for some holomorphic $\beta_1$.  Given that $\beta_1(z)=\frac{\eta^{-1}(z)}{z}$ and $A_3(\eta^{-1}(z))$ are holomorphic near $0$, $\phi(z)$ must also be holomorphic near $z=0$.  \\

After setting \eqref{lemmaeq} equal to $\0$, plugging in $\phi$ from \eqref{phi}, and solving for $\hat{A}_2$, we get:
$$\hat{A}_2(z,w):=\underbrace{A_2(z,w)}_{\footnotesize =\0\mbox{ if }w=\0}+\underbrace{[z^{\sigma+1} A_3(z)-A_1(z,w)^{\sigma+1} \phi(A_1(z,w))]}_{\footnotesize =\0\mbox{ if }w=\0\mbox{ by definition of }\phi}.$$
We see that $\hat{A}_2$ is holomorphic and $\hat{A}_2(z,\0)=\0$.  Therefore, $\Psi$ can be rewritten as the composition of $\chi$ and $\Phi$ (i.e., $\Psi=\Phi\circ\chi$).  In addition, $\chi$ must be a biholomorphism near $\0$ since $\Psi$ and $\Phi$ are biholomorphisms near $\0$, with $\Phi^{-1}(z,w)=\Id-(0,z^{\sigma+1}\phi(z))$, and so $\chi=\Phi^{-1}\circ\Psi.$
 \end{proof}

In dimension $n=2$, \eqref{biholo} and \eqref{biholo2} can be rewritten as:
\begin{align}\label{biholo3}
\Psi(z,w) &=\big(A_1(z,w),wA_2(z,w)+z^{\sigma+1}A_3(z)\big)\mbox{ and}\\
\Psi^{-1}(z,w)&=\big(B_1(z,w),wB_2(z,w)+z^{\sigma+1}B_3(z)\big),\notag\end{align}
where we replaced $A_2,B_2$, which satisfied $A_2(z,0)=B_2(z,0)=0$,  by $wA_2,wB_2$.

\begin{lem}\label{invdir}
Let $f\in\TId(\C^2,\0)$ be of order $k+1$ and have $[v]$ as a characteristic direction of degree $s$.  Suppose that $\Psi$ is a biholomorphism of $\C^2$ that fixes the origin and, near the origin, fixes $[v]$.  Then $\Psi^{-1}\circ f\circ\Psi$ has $[v]$ as a characteristic direction of degree $s$.
\end{lem}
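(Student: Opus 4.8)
The plan is to normalize coordinates so that $[v]=[1:0]$ and then rephrase ``characteristic direction of degree $s$'' as an approximate invariance property of the curve $\{w=0\}$. First I would choose an invertible linear map $L$ with $L([1:0])=[v]$ and replace $f,\Psi$ by $L^{-1}fL$ and $L^{-1}\Psi L$: linear conjugation preserves characteristic directions of degree $s$ (as recalled just before the lemma), $L^{-1}\Psi L$ still fixes $[1:0]$, and $L^{-1}(\Psi^{-1}f\Psi)L=(L^{-1}\Psi L)^{-1}(L^{-1}fL)(L^{-1}\Psi L)$, so it is enough to treat the case $[v]=[1:0]$.

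Next I would record the key reinterpretation. For $h\in\TId(\C^2,\0)$, write $h=\Id+\sum_j(h^1_j,h^2_j)$ with $(h^1_j,h^2_j)$ homogeneous of degree $j$; homogeneity gives $h^2_j(z,0)=h^2_j(1,0)z^j$, so $[1:0]$ is a characteristic direction of $(h^1_j,h^2_j)$ exactly when $h^2_j(1,0)=0$, that is, when $h^2_j(z,0)\equiv0$. Since the second coordinate of $h(z,0)$ is $\sum_j h^2_j(z,0)$, it follows that (assuming, as the definition requires, that $s$ is at least the order of $h$) $[1:0]$ is a characteristic direction of degree $s$ for $h$ if and only if the second coordinate of $h(z,0)$ is $\BO(z^{s+1})$. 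In the notation $f=\Id+(p,q)$ of \S\ref{prelim}, the hypothesis on $f$ then reads simply $q(z,0)=\BO(z^{s+1})$.

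Then I would evaluate $g:=\Psi^{-1}\circ f\circ\Psi$ on $\{w=0\}$. Since $\Psi$ fixes $[1:0]$ and its linear part $\Psi_1$ is invertible, $\Psi(z,0)=(\eta(z),0)$ for a holomorphic $\eta$ with $\eta(0)=0$ and $\eta'(0)\neq0$; in particular $\Psi$ maps the germ $\{w=0\}$ onto itself, hence so does $\Psi^{-1}$, and one may write $\Psi^{-1}(z',w')=(B_1(z',w'),\,w'B_2(z',w'))$ with $B_1,B_2$ holomorphic near $0$ (this is \eqref{biholo3}, with no correction term since $[1:0]$ is fixed to all orders). Now $f(\Psi(z,0))=(\eta(z)+p(\eta(z),0),\,q(\eta(z),0))$, whose second coordinate is $\BO(\eta(z)^{s+1})=\BO(z^{s+1})$; composing with $\Psi^{-1}$ multiplies that second coordinate by the holomorphic function $z\mapsto B_2(\eta(z)+p(\eta(z),0),\,q(\eta(z),0))$, so the second coordinate of $g(z,0)$ is again $\BO(z^{s+1})$. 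Finally, conjugation by a biholomorphism fixing $\0$ preserves the order of a tangent-to-the-identity germ (the degree-$(k+1)$ part of $g-\Id$ is $\Psi_1^{-1}\circ P_{k+1}\circ\Psi_1\not\equiv\0$), so $g\in\TId(\C^2,\0)$ has order $k+1\le s$; by the reinterpretation, $[1:0]$ is a characteristic direction of degree $s$ for $g$. Conjugating back by $L$ completes the proof.

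I do not expect a genuine obstacle. The only real idea is the reformulation ``$[1:0]$ is a characteristic direction of degree $s$ for $h$ if and only if $h$ carries the germ $\{w=0\}$ into $\{w=0\}$ up through degree $s$''; granting this, the lemma is essentially a one-line composition computation. The points that need a little care are that $\Psi^{-1}$ fixes $\{w=0\}$ whenever $\Psi$ does, and that orders are tracked correctly through the holomorphic substitution $z\mapsto\eta(z)$ — harmless because $\eta$ is invertible at $0$.
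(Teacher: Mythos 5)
Your proof is correct and follows essentially the same route as the paper: normalize $[v]$ to $[1:0]$, observe that $\Psi$ and $\Psi^{-1}$ preserve the germ $\{w=0\}$ so their second coordinates are divisible by $w$, write the hypothesis on $f$ as the second coordinate being $w f_2(z,w)+\BO(z^{s+1})$, and read off that the composition keeps this form. Your added remarks (the explicit equivalence with $q(z,0)=\BO(z^{s+1})$, the invertibility of $\eta$, and the invariance of the order $k+1$) only make explicit what the paper's one-line computation leaves implicit.
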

\begin{proof}
Without loss of generality, we can assume $[v]=[1:0]$ since we can move it there via a linear conjugation.  We can write $\Psi,\Psi^{-1}$ as in \eqref{biholo3} with $A_3=B_3=0$ and $f$ as: 
$$f(z,w)=(f_1(z,w),wf_2(z,w)+z^{s+1}S(z)),$$
where $f_1, f_2,S(z)$ are power series.  Then:
\begin{align*}
\Psi^{-1}\circ f\circ\Psi(z,w)
&=\Big(B_1( f(\Psi(z,w))), wA_2(z,w)f_2(\Psi(z,w))B_2(f(\Psi(z,w)))+\BO((z,w)^{s+1}\Big), 
\end{align*}
which fixes the direction $[1:0]$ up through degree $s$.
\end{proof}


Now we want to write an expression for a map $f$ that satisfies conditions (1)-(3) of Theorem~\hyperref[thmA]{A}.  For $f\in\TId(\C^2,\0)$ of order $k+1$, we already saw that near the origin:
$$f(z,w)=\Id+P_{k+1}(z,w)+P_{k+2}(z,w)+\cdots.$$
If we also assume that $f$ satisfies conditions (1)-(3) of Theorem~\hyperref[thmA]{A}, we can express $f$ as:
$$f(z,w)=\Id+\underbrace{w\left(\BO((z,w)^k),w\BO((z,w)^{k-1})\right)}_{\footnotesize [1:0]\mbox{ degen. char. dir. }P_{k+1},\ldots,P_r}+\big(\underbrace{z^{r+1}\BO(1)}_{\footnotesize \mbox{non-degen.}},\underbrace{wz^t\BO(1)}_{\footnotesize \mbox{order one}}+\underbrace{z^{s+1}\BO(1)}_{\footnotesize \mbox{not char. dir.}} \big).$$
To make $f$ easier to work with, we re-express $f$ as:
\begin{align}\label{feq0}
f(z,w)&=\left(z(1+z^rR(z))+wU(z,w),w(1+z^tT(z)+wV(z,w))+z^{s+1}S(z)\right),\mbox{ or} \\
\label{feq0b} &=(f_1(z,w),wf_2(z,w)+z^{s+1}S(z)), 
\end{align}
where: $R,S,T,U,V,f_1,f_2$ are holomorphic in a neighborhood of the origin; $R(0),T(0)$ are non-zero; and the power series expansions of $U,V$ near $\0$ have terms of lowest degree $k,k-1$, respectively.  

\begin{propC}
Let $f$ be a germ of a holomorphic self-map of $\C^2$ that is tangent to the identity at a fixed point $p$, is of order $k+1$, has characteristic direction $[v]$, and satisfies conditions (1)-(3) of Theorem~\hyperref[thmA]{A}.  Suppose that $\Psi$ is a biholomorphism of $\C^2$ that fixes $p$ and, near $p$, fixes $[v]$ up through degree $\sigma\geq 1$.  Then $\Psi^{-1}\circ f\circ\Psi$ has $[v]$ as a characteristic direction and:
\begin{enumerate}
\item $t$ is invariant if $\sigma>t-k$,
\item $r$ is invariant if $\sigma>r-k$, and
\item $s$ is invariant if $\sigma>\max\{s-t,\frac{s-k}{2}\}$.
\end{enumerate}
Therefore, $k,t,r,s$ are invariant under conjugation by $\Psi$ if $\sigma>\max\left\{r-k,s-t,\frac{s-k}{2}\right\}$.  Consequently, if $s>r+t-k$, then $k,t,r,s$ are invariant under conjugation by $\Psi$ if $\sigma> s-t$.\end{propC}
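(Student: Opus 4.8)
The plan is to normalize coordinates, factor the conjugating biholomorphism through Theorem~\hyperref[thmB]{B}(3) into a part fixing $[v]$ in every degree and an explicit ``defect'' part, dispose of the first part by an argument in the spirit of Lemma~\ref{invdir}, and read the effect of the second part on $t,r,s$ directly off an explicit formula. First I would conjugate by a linear map so that $p=\0$ and $[v]=[1:0]$: this preserves the order $k+1$, and since any two such normalizations differ by conjugation by an upper-triangular linear map — one fixing $[1:0]$, hence preserving each $l_j$ and so $t$, while $r$ and $s$ are coordinate-free — it preserves $k,t,r,s$; it also carries the hypothesis ``$\Psi$ fixes $[v]$ up through degree $\sigma$'' to ``$\Psi$ fixes $[1:0]$ up through degree $\sigma$''. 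Applying Theorem~\hyperref[thmB]{B}(3) with $L=\Id$, I then write $\Psi=\Phi\circ\chi$, where $\chi$ is a biholomorphism fixing $\0$ and $[1:0]$ in every degree and $\Phi=\Id+(0,z^{\sigma+1}\phi(z))$ for a holomorphic $\phi:\C\to\C$, so that $\Psi^{-1}\circ f\circ\Psi=\chi^{-1}\circ(\Phi^{-1}\circ f\circ\Phi)\circ\chi$.

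The conjugation by $\chi$ is the easy half: I would show it preserves the form \eqref{feq0} together with all of $k,t,r,s$. That $[v]$ stays a characteristic direction and that ``$s$'' is unchanged follows from Lemma~\ref{invdir}, applied with $\chi$ and then with $\chi^{-1}$ to obtain both inequalities; the statements for $t$ and $r$ come from the same kind of bookkeeping: writing $\chi,\chi^{-1}$ as in \eqref{biholo3} with $A_3=B_3=0$, taking $g$ of the form \eqref{feq0}, and checking that the $w$-free part of the first coordinate of $\chi^{-1}\circ g\circ\chi$ is $z+z^{r+1}\cdot(\text{unit})$ while the $w$-derivative at $w=0$ of its second coordinate is $1+z^{t}\cdot(\text{unit})$, using $R(0),T(0)\neq0$ and $k\leq t\leq r<s$ to rule out cancellation. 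It therefore suffices to analyze $g:=\Phi^{-1}\circ f\circ\Phi$.

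Writing $f$ as in \eqref{feq0b} and setting $\tilde w:=w+z^{\sigma+1}\phi(z)$, the first coordinate of $g$ is $f_1(z,\tilde w)$ and the second is $\tilde w\,f_2(z,\tilde w)+z^{s+1}S(z)-f_1(z,\tilde w)^{\sigma+1}\phi\big(f_1(z,\tilde w)\big)$. Since $\Phi\in\TId(\C^2,\0)$ the order $k+1$ is preserved, and because the $z^{\sigma+1}\phi(z)$ appearing in the second coordinate of $f\circ\Phi$ is cancelled by the $\Phi^{-1}$-factor, the degree-$(k+1)$ part of the second coordinate of $g$ still carries a factor of $w$, so $[1:0]$ remains a characteristic direction — which, with $\chi$ fixing $[1:0]$, already gives the first assertion of Proposition~\hyperref[propC]{C}; it then remains to track $t,r,s$. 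For $\tilde r$: $g_1(z,0)-z=z^{r+1}R(z)+z^{\sigma+1}\phi(z)\,U\big(z,z^{\sigma+1}\phi(z)\big)$, and since $U$ has $z$-order $\geq k$ after the substitution the second summand has $z$-order $\geq\sigma+1+k$, so $\sigma>r-k$ makes $R(0)z^{r+1}$ the leading term and $\tilde r=r$. For $\tilde t$: differentiating the second coordinate in $w$ and setting $w=0$ gives $1+z^{t}T(z)$ plus corrections — from $f_1(z,\tilde w)^{\sigma+1}\phi(f_1(z,\tilde w))$, from the $V$-term of $f_2$, and from the $z^{\sigma+1}\phi(z)$-part of $\tilde w$ — all of $z$-order $\geq\sigma+k$, so $\sigma>t-k$ gives $\tilde t=t$. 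For $\tilde s$: with $H(x):=x^{\sigma+1}\phi(x)$ and $e(z):=g_1(z,0)-z$, the $z^{\sigma+1}\phi(z)$ produced by $z^{\sigma+1}\phi(z)f_2\big(z,z^{\sigma+1}\phi(z)\big)$ cancels $H(z)$ inside $H(g_1(z,0))=H(z+e(z))$, and what is left of $g_2(z,0)$ besides $z^{s+1}S(z)$ consists of $z^{\sigma+1+t}\phi(z)T(z)$ (of $z$-order $\geq\sigma+1+t$), $z^{2\sigma+2}\phi(z)^2 V\big(z,z^{\sigma+1}\phi(z)\big)$ (of order $\geq2\sigma+k+1$), $H'(z)e(z)$ (of order $\geq\min\{\sigma+r+1,\,2\sigma+k+1\}$), and the higher Taylor terms $H^{(m)}(z)e(z)^m/m!$ for $m\geq2$; under $\sigma>s-t$ and $\sigma>\frac{s-k}{2}$, and using $\sigma\geq1$ and $r\geq t\geq k\geq1$, each of these has $z$-order $\geq s+2$, so the $z^{s+1}$-coefficient of $g_2(z,0)$ is $S(0)\neq0$ and $\tilde s=s$. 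Together with the previous paragraph this proves (1)--(3); the ``Therefore'' claim holds because $\max\{r-k,\,s-t,\,\frac{s-k}{2}\}\geq r-k\geq t-k$, and the ``Consequently'' claim because $s>r+t-k$ with $r\geq t$ forces $s-t>r-k$ and $s>2t-k$, hence $s-t\geq\frac{s-k}{2}$ and $s-t\geq t-k$, so $\sigma>s-t$ alone implies $\sigma>\max\{r-k,\,s-t,\,\frac{s-k}{2}\}$.

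I expect the computation of $\tilde s$ to be the main obstacle. The term $f_1(z,\tilde w)^{\sigma+1}\phi\big(f_1(z,\tilde w)\big)$ is the one place where $\Phi$ genuinely moves $[1:0]$, and expanding it in Taylor series produces corrections that have to be balanced against the $z^{\sigma+1}\phi(z)f_2$ term; keeping all of them out of degrees $\leq s+1$ in the $w$-free part of the second coordinate of $g$ is precisely what forces the two-sided bound $\sigma>\max\{s-t,\frac{s-k}{2}\}$ — the linear bound $s-t$ coming from the coupling with $z^{t}T(z)$, and the quadratic bound $\frac{s-k}{2}$ from the degree-two pieces $z^{2\sigma+2}\phi(z)^2V$ and $H''(z)e(z)^2$.
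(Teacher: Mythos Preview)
Your proposal is correct and follows essentially the same strategy as the paper's proof: reduce to $[v]=[1:0]$, invoke Theorem~\hyperref[thmB]{B}(3) to factor $\Psi=\Phi\circ\chi$, show that conjugation by $\chi$ (which fixes $[1:0]$ in every degree) preserves $t,r,s$ outright, and then read off the $\sigma$-thresholds from the explicit form of $\Phi^{-1}\circ f\circ\Phi$. The paper organizes the $\Phi$-computation as a direct expansion of $(z_1,w_1)$ in \eqref{conjfinv}--\eqref{conjfinv2}, whereas you package the same bookkeeping via $H(x)=x^{\sigma+1}\phi(x)$ and its Taylor expansion around $z$; the resulting order estimates and the conditions $\sigma>t-k$, $\sigma>r-k$, $\sigma>\max\{s-t,\tfrac{s-k}{2}\}$ are identical. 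Your explicit verification of the ``Consequently'' clause is slightly more detailed than the paper's, but the content is the same.
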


\begin{rem}\label{tauforinv}
If we make the additional assumption from Theorem~\hyperref[thmA]{A} that $s>r+t-k$, then the condition on $\sigma$ in Proposition~\hyperref[propC]{C} simplifies to $\sigma>s-t$.  Also, from Proposition~\hyperref[propC]{C}, we can see that it is (sometimes) possible to conjugate a map that does not satisfy the conditions of Theorem~\hyperref[thmA]{A} to one that does satisfy these conditions by using a biholomorphism that does not fix $[v]$. \end{rem}

\begin{proof}  \label{proofpropC}
Without loss of generality, we assume $[v]=[1:0]$ since we can move it there via a linear conjugation.  We will use the decomposition of $\Psi=\Phi\circ\chi$ from Theorem~\hyperref[thmB]{B} to help show that $k,t,r,s$ are invariant under conjugation by $\Psi$ for $\sigma$ sufficiently large.  We already know that $k+1$, the order of $f$, is invariant under conjugation by any biholomorphism that fixes the origin. \\

First of all, we will show that $t,r,s$ are invariant under conjugation by $\chi$, which fixes $[1:0]$:
$$\chi(z,w)=(A_1(z,w),w\hat{A}_2(z,w))\mbox{ and }\chi^{-1}(z,w)=(B_1(z,w),w\hat{B}_2(z,w)).$$  
Secondly, we will show how $t,r,s$ can vary under conjugation by $\Phi$:
$$\Phi(z,w)=\Id+(0,z^{\sigma+1}\phi(z))\mbox{ and }\Phi^{-1}(z,w)=\Id-(0,z^{\sigma+1}\phi(z)).$$
 Since $\Psi=\Phi\circ\chi$, we will conclude that $\sigma$ determines which of $t,r,$ and $s$ are invariant. \\

We first conjugate $f$ by $\chi$, using \eqref{feq0b} and that $k\leq t\leq r<s$, to get $\chi^{-1}(f(\chi(z,w)))$ equals:
\begin{align}\label{conjf}
 \left(B_1(f(\chi(z,w))),
 \left(w\hat{A}_2(z,w) f_2(\chi(z,w))+A_1^{s+1}(z,w)S(A_1(z,w))\right)\hat{B}_2(f(\chi(z,w))) \right) 
\end{align}
To see that $t$ is invariant, we focus on $f_2(\chi(z,0))$ since $\hat{B}_2,\hat{A}_2$ have non-zero constant terms and $A_1^{s+1}S(A_1(z,w)))=\BO((z,w)^{s+1})$ with $s>t$:
$$f_2(\chi(z,0))=1+(A_1(z,0))^t T(A_1(z,0)).$$
Since $A_1$ has a linear term in $z$ and $T(0)\neq0$, we see that the $f_2\circ\chi$ has a non-zero term $z^t$ and no lower degree terms in just $z$.  Therefore $t$ is invariant under $\chi$. \\

To show that $r$ is invariant, we look at the first coordinate of \eqref{conjf} and let $w=0$:
\begin{align*}
B_1\circ f\circ\chi(z,0) &=
B_1\left(A_1(1+A_1^{r}R\circ A_1),\BO(A_1^{s+1})\right) \\
& =B_1\left(A_1(1+A_1^{r}R(0)),0\right)+\BO(z^{r+2}).
\end{align*}
Using that $B_1(z,0)=\sum b_j z^j$, $b_1\neq0$, and $B_1(A_1(z,0),0)=z$ we see that:
$$B_1\left(A_1(1+A_1^{r}R(0)),0\right)
=\sum_{j\geq 1} b_j A_1^j(1+A_1^rR(0))^j
=z+\alpha z^{r+1}+\BO(z^{r+2}),$$
where $\alpha\neq0$ since $R(0)\neq0$.  Therefore $r$ is invariant under $\chi$. \\

To show that $s$ is invariant, we look at the second coordinate of \eqref{conjf} and let $w=0$:
$$A_1^{s+1}(z,0)S(A_1(z,0)) \B(f(\chi(z,0)))=\beta z^{s+1}+\BO(z^{s+2}),$$
for some $\beta\neq0$.  We used that $S(0)\B(0,0)\neq0$ and $A_1$ has a non-zero linear term in $z$.  Therefore $s$ is invariant under $\chi$.  Consequently, $k,t,r,s$ are invariant under conjugation by any biholomorphism, such as $\chi$, that preserves $\{w=0\}$. \\

Now we show how conjugation by $\Phi$ can affect $t,r,s$ depending on the size of $\sigma$.  If $\sigma$ is sufficiently large, then $t,r,s$ are invariant under conjugation by $\Phi$.  Note that: 
$$\Phi(z,w)=(z,w+z^{\sigma+1}\phi(z))\qquad\mbox{ and }\qquad\Phi^{-1}(z,w)=(z,w-z^{\sigma+1}\phi(z)).$$  
We will be using our assumption that $f$ satisfies conditions (1)-(3) of Theorem~\hyperref[thmA]{A}; in particular, that $k\leq t\leq r<s$ and $k<r$.  \\

Using the expression for $f$ in \eqref{feq0}, we see that $\Phi^{-1}\circ f\circ\Phi(z,w):=(z_1,w_1)$ is:
\begin{align}\label{conjfinv}
z_1&= z(1+z^r R(z))+(w+z^{\sigma+1}\phi(z))U(z,w+z^{\sigma+1}\phi(z)) \\ 
  &= z\left(1+z^r R(z)+z^{\sigma}\phi(z)U(z,z^{\sigma+1}\phi(z))\right)+w\BO((z,w)^k) \notag \\ 
  &=z\left(1+z^rR(z)+\BO(z^{\sigma+k})\right)+w\BO((z,w)^k) \notag
\end{align}
\begin{align}\label{conjfinv2}
w_1 &=(w+z^{\sigma+1}\phi(z))(1+z^tT(z)+(w+z^{\sigma+1}\phi(z))V(\Phi(z,w)))+z^{s+1}S(z)-z_1^{\sigma+1}\phi(z_1)   \\ 
&=w\left(1+z^tT(z)+ \BO(z^{\sigma+k})
+w\BO((z,w)^{k-1})\right)+z^{\sigma+1}\phi(z)+\BO\left(z^{\sigma+1+t},z^{2\sigma+1+k}\right) \notag \\ 
&~~\mbox{         }~+z^{s+1}S(z)-\Big(z^{\sigma+1}\big(1+\BO(z^{r},z^{\sigma+k})\big)+w\BO((z,w)^{\sigma+k})\Big)\phi(z_1) \notag \\
&=w\left(1+z^tT(z)+ \BO(z^{\sigma+k})
+w\BO((z,w)^{k-1})\right)+z^{s+1}S(z)+\BO\left(z^{\sigma+1+t},z^{2\sigma+1+k}\right). \notag
 \end{align} 

From \eqref{conjfinv} and \eqref{conjfinv2}, we see that $t,r,s$ are invariant given the following conditions on $\sigma:$
\begin{enumerate}
\item $t$ is invariant if $\sigma>t-k$,
\item $r$ is invariant if $\sigma>r-k$, and
\item $s$ is invariant if $\sigma>\max\{s-t,\frac{s-k}{2}\}$.
\end{enumerate}
Since $t,r,s$ are invariant under $\chi$, we now see that $t,r,s$ are invariant under $\Psi=\Phi\circ\chi$ if:
$$\sigma>\max\left\{r-k,s-t,\frac{s-k}{2}\right\}.$$
 \end{proof}
 
Notice that, depending on the relative sizes of $k,t,r,s$, it is possible that $\sigma$ is large enough for $s$ to be invariant without $t,r$ being invariant or that $t,r$ are invariant without $s$ being invariant.  \\

\section{Set-Up}\label{setup}
Now we use some of the assumptions in Theorem~\hyperref[thmA]{A} to rewrite $f$.  In particular, assume $[1:0]$ is:
\begin{enumerate}
\item a characteristic direction of degree $s$, where $s\leq\infty$;   
\item non-degenerate of degree $r+1$, where $k<r<s$; and
\item of order one in degree $t+1$, where $k\leq t\leq r$. \\
\end{enumerate} 
%

Given these assumptions, we saw in \eqref{feq0} that we can express $f$ as follows:
\begin{equation}
\label{feq1}
f(z,w)=\left( z(1+z^rR(z))+wU(z,w),~w(1+z^tT(z)+wV(z,w))+z^{s+1}S(z)\right),
\end{equation}
where $R,S,T,U,V$ are holomorphic near the origin.  Furthermore, $R,S,T$ satisfy these properties:
\begin{align}
\label{RST}
\bullet~ & c:=S(0)\neq0,\mbox{ so that }S(z)=c+\BO(z)\mbox{ near }0,\mbox{ or }s=\infty\mbox{ and }S\equiv0;\\
\bullet~ & a:=R(0)\neq0,\mbox{ so that }R(z)=a+\BO(z) \mbox{ near }0; \mbox{ and} \notag \\
\bullet~ & b:=T(0)\neq0,\mbox{ so that }T(z)=b+\BO(z) \mbox{ near }0. \notag
\end{align}

In addition, we can bound $U,V$ using the orders of vanishing $m,l$ given in Remark \ref{mln} so that:
\begin{align}
\label{UV}
U(z,w) &=w^{m-1}\BO((z,w)^{k+1-m})+\BO((z,w)^{k+1})\mbox{ and}\\ V(z,w)&=w^{\max\{l-2,0\}}\BO\left((z,w)^{k-1-\max\{l-2,0\}}\right)+\BO((z,w)^{k}),\notag
\end{align}
where $m,l\geq 1$ since $[1:0]$ is a degenerate characteristic direction.  If we simplify these bounds to ignore the specific values of $m,l$ we see that $U(z,w)=\BO((z,w)^k)$ and $V(z,w)=\BO((z,w)^{k-1})$. \\

Any invertible linear map fixing the origin and $[1:0]$ must be of the form $L(z,w)=(a_1 z+b_1 w, a_2 w)$, where $a_1a_2\neq 0$. 
We conjugate $f$ in \eqref{feq1} by $L$ (with $b_1=0$) and rename $R,S,T,U,V$ so that they satisfy the same conditions as in \eqref{RST} and \eqref{UV}.  Then $f$ (which is actually $L^{-1}\circ f\circ L$) is: 
\begin{equation}\label{feq2}
\big(z(1+a_1^{r}z^{r}R(z))+wU(z,w),~w\left(1+a_1^tz^tT(z)+wV(z,w)\right)+a_1^{s+1}a_2^{-1}z^{s+1}S(z)\big).
\end{equation}
Let $\beta:=-ba_1^t$ and choose $a_1$ so that $aa_1^{r}=-r^{-1}$.  When possible, choose the $\frac{1}{r}$-root so the director has positive real part, where
we extend the definition of the \textit{director} of $[v]$ (given in \ref{director}) to:
\begin{equation}\label{newdirector}
\Delta:=
\begin{cases}\beta=-b(-ar)^{-\frac{t}{r}}&\mbox{ if }t<r \\ 
\beta-\frac{r-k+1}{r}=\frac{1}{r}\left(\frac{b}{a}-(r-k+1)\right)&\mbox{ if }t=r,\end{cases}.
\end{equation}
If $\frac{r}{gcd(r,t)}>2$ (i.e., $r\neq t, 2t$), we can always choose the $\frac{1}{r}$-root so that $\re(\Delta)>0$.  Otherwise, we might have $\re(\Delta)>0$, but it is not guaranteed.  The definition of director is invariant under any biholomorphic change of coordinates that fixes $[v]$ or, more generally, that leaves $t,r$ invariant as we can see from the proof of Proposition~\hyperref[proofpropC]{C}

\begin{rem}The definition of director given in \eqref{newdirector} agrees with the standard definition of director (see \ref{director}), the latter of which only applies to the case when $k=t=r$.  
Our definition of director differs from that given in \cite{Ro2} in two ways.  First of all, the definition in \cite{Ro2} only applies to $k=t<r$, whereas our definition applies to $k\leq t\leq r$ and $k\neq r$.  Secondly, our definition is multiplied by $r^{-\frac{t}{r}}$, which does not affect whether $\re(\Delta)>0$.  As we only care about the sign of $\re(\Delta)$, all three definitions effectively agree.\end{rem}


We extend the definition of transversally attracting used in Rong's paper \cite{Ro2} to this setting.  

\begin{Def}\label{transv}
We say that $f$ is \textit{transversally attracting} in $[v]$ if $\re(\Delta)>0.$\end{Def}

Given $f$ and $[v]$ that satisfy conditions (1)-(3) of Theorem~\hyperref[thmA]{A}, then $[v]$ is transversally attracting: always when $r\not\in\{t,2t\}$, almost always when $r=2t$, and sometimes when $r=t$.  \\

In the next lemma we see that in this situation $f^{-1}$ shares many of the same properties as $f$.

\begin{lem}\label{f-1}
Let $f$ be a germ of a holomorphic self-map of $\C^2$ that is tangent to the identity at a fixed point $p$, is of order $k+1$, and satisfies conditions (1)-(3) of Theorem~\hyperref[thmA]{A} with characteristic direction $[v]$.  Then $f^{-1}$ satisfies conditions (1)-(3) of Theorem~\hyperref[thmA]{A} with characteristic direction $[v]$ and the same $k,r,s,t$.  If $r=t$, then $[v]$ is transversally attracting for $f$ if and only if it is for $f^{-1}$.  If $r=2t$, then $[v]$ must be transversally attracting for $f$ or $f^{-1}$. 
\end{lem}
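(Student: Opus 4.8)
The plan is to split the statement into two parts: first, that $f^{-1}$ again satisfies conditions (1)--(3) of Theorem~\hyperref[thmA]{A} with the \emph{same} $k,r,s,t$, which I would prove by reading off the low-order terms of $f^{-1}$ from the normal form \eqref{feq1} of $f$; second, the transversal-attracting assertions, which I would deduce from an explicit comparison of the directors $\Delta_f$ and $\Delta_{f^{-1}}$ via \eqref{newdirector}. Throughout we may take $[v]=[1:0]$ as in the rest of \S\ref{setup}, and write $a:=R(0)$, $b:=T(0)$, $c:=S(0)$ as in \eqref{RST}.

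For the first part, note that $f^{-1}\in\TId(\C^2,\0)$ has order $k+1$ (immediate, or Lemma~\ref{inverse} with $\tau=k+1$), so $k$ is invariant. Writing $f$ as in \eqref{feq1}, the decisive observation is that the only monomials of $f-\Id$ not divisible by $w$ are the pure-$z$ terms $z^{r+1}R(z)$ in the first coordinate and $z^{s+1}S(z)$ in the second; in particular $f(z,0)=(z+z^{r+1}R(z),\,z^{s+1}S(z))$. Solving $f(Z,W)=(z,w)$ for $(Z,W)=f^{-1}(z,w)$ as formal power series, and using $k<r<s$ and $k\le t\le r$ to see that the coupling terms $wU(Z,W)$, $W^{2}V(Z,W)$, $z^{s+1}S(Z)$ are of strictly higher order than the monomials of interest, I would extract the following. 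The pure-$z$ part of $W$ is $-cz^{s+1}+\BO(z^{s+2})$, so the second coordinate of $f^{-1}$ has the form $w(\cdots)+z^{s+1}\widetilde S(z)$ with $\widetilde S(0)=-c$ (and $\widetilde S\equiv0$ when $s=\infty$), whence $\widetilde s=s$ and $[1:0]$ is a characteristic direction of degree $s$ for $f^{-1}$. Feeding this back into the first equation, $Z=z-az^{r+1}+\BO(z^{r+2})+w\widetilde U(z,w)$ with $\widetilde U=\BO((z,w)^{k})$ and $\widetilde R(0)=-a\ne0$, whence $[1:0]$ is degenerate for $P^{f^{-1}}_{k+1},\dots,P^{f^{-1}}_{r}$ and non-degenerate in degree $r+1$, so $\widetilde r=r$. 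Finally, tracking the coefficient of the monomial $wz^{j}$ in the second coordinate of $f^{-1}$, this coefficient vanishes for $j<t$ and equals $-b\ne0$ for $j=t$ (the competing contributions being divisible by $w^{2}$ or of too high degree), so $[1:0]$ is of order one in degree $t+1$ for $f^{-1}$, with $\widetilde t=t$. Thus $f^{-1}$ is again of the form \eqref{feq1} with characteristic direction $[v]$ and satisfies (1)--(3) with $k,r,s,t$ unchanged.

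For the transversal-attracting part, the first part shows that $\Delta_{f^{-1}}$ is computed from \eqref{newdirector} with the same $k,r,t$ but with $a,b$ replaced by $\widetilde a=-a$, $\widetilde b=-b$. Writing $\widetilde\beta=-\widetilde b\,\widetilde a_1^{t}$, where $\widetilde a_1$ is chosen so that $\widetilde a\,\widetilde a_1^{r}=-1/r$, i.e.\ $\widetilde a_1^{r}=-a_1^{r}$: if $t=r$ then $\widetilde a_1^{t}=\widetilde a_1^{r}=-a_1^{r}$ is forced, so $\widetilde\beta=-(-b)(-a_1^{r})=\beta$ and hence $\Delta_{f^{-1}}=\Delta_f$; in particular $\re(\Delta_{f^{-1}})=\re(\Delta_f)$, so $f$ is transversally attracting iff $f^{-1}$ is. If $t\ne r$ then by the remark following Theorem~\hyperref[thmA]{A} there is nothing to prove unless $r=2t$. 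When $r=2t$ we have $t/r=\tfrac12$, so $a_1^{t}\in\{\delta_0,-\delta_0\}$ with $\delta_0^{2}=-1/(ar)$ and hence $\Delta_f\in\{\delta,-\delta\}$ where $\delta:=-b\delta_0$; likewise $\widetilde a_1^{t}\in\{i\delta_0,-i\delta_0\}$ for the matching branch, so $\Delta_{f^{-1}}\in\{i\delta,-i\delta\}$. The four admissible director values $\delta,-\delta,i\delta,-i\delta$ are $\delta$ times the fourth roots of unity and $\delta\ne0$, so at least one of them lies in the open right half-plane; since the director is by definition chosen to make $\re(\Delta)$ as large as possible, at least one of $f,f^{-1}$ is transversally attracting.

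The step I expect to be the main obstacle is the bookkeeping in the first part in the cases where the degrees of the relevant monomials collide --- chiefly $t=r$ (where the $z^{r+1}R$ term of the first coordinate and the $wz^{r}T$ term of the second coordinate sit in the same degree) and $t=k$ --- together with checking that the higher-order corrections produced by inversion (cf.\ the $B_{2k+1}$ term of Lemma~\ref{inverse}, relevant once $r+1$ or $t+1$ exceeds $2k$) cannot perturb the leading coefficients $\widetilde R(0),\widetilde T(0),\widetilde S(0)$. What makes this go through is that every such correction is built from $P_{k+1},P_{k+2},\dots$, whose first-coordinate components, and --- below degree $s+1$ --- second-coordinate components, are divisible by $w$ apart from the explicit $z^{r+1}R$, $z^{s+1}S$ contributions; hence no correction can occupy the pure-$z$ monomials $z^{r+1}$ or $z^{s+1}$, and the only corrections to the $wz^{t}$ coefficient either have strictly higher degree or carry an additional factor of $w$.
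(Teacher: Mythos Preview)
Your proposal is correct and follows essentially the same route as the paper: both arguments extract from the normal form \eqref{feq1} that the leading coefficients for $f^{-1}$ are $\widetilde a=-a$, $\widetilde b=-b$, $\widetilde c=-c$ (the paper does this by writing $f^{-1}$ in the same shape via Theorem~\hyperref[thmB]{B} and reading off $\hat r,\hat t$ from $f^{-1}\circ f=\Id$, whereas you invert directly), and then compare the directors via \eqref{newdirector} exactly as you do. The transversal-attracting analysis for $r=t$ and $r=2t$ is likewise the same in substance.
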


\begin{proof}Without loss of generality, we move $p$ to $\0$ and $[v]$ to $[1:0]$ via a linear change of coordinates.  By Theorem~\hyperref[thmB]{B}, since $[1:0]$ is a characteristic direction of degree $s$ for $f$, it must also be for $f^{-1}$.  We can write $f^{-1}$ in the same way as we wrote $f$ in \eqref{feq0} so that:
\begin{align*}
f(z,w)&=\left(z(1+z^rR(z))+wU(z,w),w(1+z^tT(z)+wV(z,w))+z^{s+1}S(z)\right) \\
f^{-1}(z,w)&=\left(z(1+z^{\hat{r}}\hat{R}(z))+w\hat{U}(z,w),w(1+z^{\hat{t}}\hat{T}(z)+w\hat{V}(z,w))+z^{s+1}\hat{S}(z) \right),
\end{align*}
where $\hat{R},\hat{S},\hat{T},\hat{U},\hat{V}$ are power series, $\hat{R}(0)\neq0$ or $\hat{R}\equiv0$, and $\hat{T}(0)\neq 0$ or $\hat{T}\equiv0$.  Let $\pi_j$ be projection onto the $j$th-coordinate.  Then:
\begin{align*}
z&=\pi_1\circ f^{-1}\circ f(z,0)
 =z+z^{r+1}R(0)+z^{\hat{r}+1}\hat{R}(0) +\BO(z^{r+2},z^{\hat{r}+2},z^{s+1})
\end{align*}
Since $R(0)\neq 0$, $s>r$, and the previous equation holds for all $z$ near $0$, we must have that $\hat{r}=r$ and $\hat{R}(0)=-R(0)$.  Now looking at the second coordinate we see that:
\begin{align*}
w&=\pi_2\circ f^{-1}\circ f(z,w)
    =w+wz^tT(0)+wz^{\hat{t}}\hat{T}(0)+w\BO(w,z^{t+1},z^{\hat{t}+1})+\BO(z^{s+1}).  
\end{align*}
Since $T(0)\neq0,$ $s>t$, and the previous equation holds for all $(z,w)$ near $\0$, we must have that $\hat{t}=t$ and $\hat{T}(0)=-T(0)$.  \\

As in \eqref{RST}, let $a=R(0)=-\hat{R}(0)$ and $b=T(0)=-\hat{T}(0)$.  Let $\Delta$ and $\hat{\Delta}$ be the directors for $f$ and $f^{-1}$, respectively.  When $r=t$, $\Delta=\hat{\Delta}$ and so $[v]$ is transversally attracting for $f$ if and only if it is for $f^{-1}$.  When $r=2t$,
$\hat{\Delta}=b(ar)^{-\frac{1}{2}}=-\Delta (-1)^{-\frac{1}{2}}.$  As long as $\Delta,\hat{\Delta}\not\in\R$, we can choose the square root so $\re\Delta,\re\hat{\Delta}>0$, hence $[v]$ is transversally attracting for both $f$ and $f^{-1}$.  If $\Delta\in\R$, then $\hat{\Delta}\in i\R$ and we can choose the root so $\Delta>0$, hence $[v]$ is transversally attracting for $f$, but not $f^{-1}$.  Similarly, if we switch $\Delta$ and $\hat{\Delta}$, then $[v]$ is transversally attracting for $f^{-1}$, but not $f$.
\end{proof}

We now rewrite $f$ from \eqref{feq2} by using our definition of $a_1,\beta$ and $\lambda:=ca_1^{s+1}a_2^{-1}$:
\begin{equation}\label{feq3}
f(z,w)=\left(z\Big(1-\frac{1}{r}z^{r}R(z)\Big)+wU(z,w),~w\big(1-\beta z^tT(z)+wV(z,w)\big)+\lambda z^{s+1}S(z)\right),\end{equation}
where we renamed $R,S,T$ to satisfy the same bounds as in \eqref{RST} and $R(0)=T(0)=S(0)=1$.  If $c\neq0$, we choose $a_2$ so that $|\lambda|\ll 1$; if $s=\infty$, then $c=\lambda=0$.

\section{Attracting domain}\label{proof}
We now prove the main theorem.  Assume that $[v]$ is transversally attracting (so $\re\Delta>0$).  Let
$$V:=V_{\delta,\theta}:=\left\{z\in\C \ | \ 0<|z|<\delta, |\Arg(z)|<\theta\right\}$$
$$D:=D_{\delta,\theta,\mu}:=\left\{(z,w)\in\C^2 \ | \ z\in V_{\delta,\theta}, |w|<|z|^{\mu}\right\}$$
where $0<\theta\ll \frac{\pi}{4r}$, $0<\epsilon<\frac{1}{3}$, $0<\delta\ll 1$, and    
\begin{equation}\label{eta}
\mu:=\max\left\{\frac{r-k+\epsilon}{m}+1,r-k+\epsilon,t-k+1+\epsilon\right\}\leq r-k+1+\epsilon.\end{equation}
 If $t=r$, choose $\epsilon$ so that $0<\epsilon<r\re\Delta$.  We choose $\delta$ small based on $\epsilon$; in particular, so that 
$0<\delta^\epsilon\ll\min\left\{r^{-1},\re\hat{\beta}\right\}$, 
where $\hat{\beta}$ is defined in \eqref{betahat}.  We want to minimize $\mu$ since we will need $s+1\geq\mu+t+2\epsilon$, but we also need $\mu$ to be sufficiently large, as we'll see in \eqref{etabdd}. 

\begin{rem}\label{smubdd}
In condition (1) of Theorem~\hyperref[thmA]{A} we require $s>r$ and we also assume that $s>r+t-k$.  Given the bounds on $\mu$ in \eqref{eta} and that $s\in\N$, we see that $s+1\geq\mu+t+2\epsilon.$\end{rem}



We use the bounds on $U,V$ from \eqref{UV} and on $w$ for $(z,w)\in D$ to get these bounds:
\begin{align*}
\frac{w}{z}U(z,w)& = \BO(w^m z^{k-m},wz^k)=\BO(z^{k+m(\mu-1)},z^{k+\mu})=\BO(z^{r+\epsilon}) \\
wV(z,w) &=\BO(wz^{k-1})=\BO(z^{k+\mu-1})=\BO(z^{t+\epsilon})
\end{align*}
where the bounds on the right are at least as lenient as those on the left.  We chose $\mu$ to get the bounds on $\frac{w}{z}U$ and $wV$ listed on the far right. In particular, we chose $\mu$ as in \eqref{eta} since:
\begin{align}\label{etabdd}
k+m(\mu-1)&\geq r+\epsilon & \Leftrightarrow && \mu\geq& \frac{r-k+\epsilon}{m}+1, \\
k+\mu&\geq r+\epsilon & \Leftrightarrow && \mu\geq &r-k+\epsilon,\mbox{ and} \notag\\
k+\mu-1&\geq t+\epsilon &\Leftrightarrow && \mu\geq& t-k+1+\epsilon.\notag
\end{align}


We want to show that $f(D)\subset D$ and $(z_n,w_n)\to (0,0)$ along $[1:0]$ as $n\to\infty$.  \\ 

First we show that $D$ is $f$-invariant.  Take any $(z,w)\in D$.  Then:
$$0<|z_1|= |z|\left|1-\frac{1}{r}z^{r}R(z)+\frac{w}{z}U(z,w)\right|< 
\delta \left|1-\frac{1}{r}z^{r}+\BO\left(z^{r+\epsilon}\right)\right|<\delta$$
and 
$$|\Arg(z_1)|=\left|\Arg(z)+\Arg\Big(1-\frac{1}{r}z^r+\BO\left(z^{r+\epsilon}\right)\Big)\right|<\theta.$$
So $z_1\in V$ and now we need to show that $|w_1|<|z_1|^{\mu}$.
 
 \begin{align}
\label{wbound}\frac{|w_1|}{|z_1|^{\mu}} &
	=\frac{|w|}{|z|^{\mu}}
		\left|\frac{1-\beta z^t+\BO\left(z^{t+\epsilon}\right) 
		+\lambda z^{s+1}w^{-1}S(z)} 
		{\left(1-\frac{1}{r}z^r+\BO\left(z^{r+\epsilon}\right)\right)^{\mu}} \right| \\ 
\label{wboundb}	&=\frac{|w|}{|z|^{\mu}}
		\left|
		1-\beta z^t+\frac{\mu}{r}z^r+\BO\left(z^{t+\epsilon}\right)+\lambda z^{s+1}w^{-1}(1+\BO(z))	
		 \right|
\end{align}

Now we need to use the assumption that $t\leq r$ so that $z^r$  does not dominate $z^t$ in \eqref{wbound}.  
We will also use the assumption that $s+1-\mu>t+2\epsilon$ to help make $z^{s+1}$ smaller than $z^tw$.  
Let
\begin{equation}\label{betahat}
\hat{\beta}:=\begin{cases}\beta=\Delta, & \mbox{ if }t<r \\ \beta-\frac{\mu}{r}=\Delta-\frac{\epsilon}{r},& \mbox{ if }t=r.\end{cases}\end{equation}

When $t=r$ and $\re\Delta>0$, we chose $0<\epsilon<r\re\Delta$ so that $\re\hat{\beta}>0$.  
Since $[v]$ is transversally attracting (i.e., $\re\Delta>0$) in Theorem~\hyperref[thmA]{A}, we can now assume $\re\hat{\beta}>0$.  We rewrite \eqref{wboundb} as:
\begin{align}\label{wbound2}
\frac{|w_1|}{|z_1|^{\mu}} &
	=\frac{|w|}{|z|^{\mu}}
		\left|
		1-\hat{\beta} z^t+\BO\left(z^{t+\epsilon}\right)+\lambda z^{s+1}w^{-1}(1+\BO(z))	
		 \right|
\end{align}

If $w=0$, then $\frac{|w_1|}{|z_1|^{\mu}}=|\BO(z^{s+1-\mu})|<1$ for $s+1\geq\mu+\epsilon$ and so $(z_1,w_1)\in D$.  \\

If $\displaystyle w\neq 0$, then $|w|=|z|^\gamma$ for some $\gamma:=\gamma(z,w)>\mu$. We split this situation into two cases. \\

Case 1.  Suppose $\gamma\geq\mu+\epsilon$.  
Then we can use \eqref{wbound2} and that $s+1\geq\mu+\epsilon$ to get:
\begin{align*}
\frac{|w_1|}{|z_1|^{\mu}} &
	\leq |z|^{\epsilon}
		\left|
		1-\hat{\beta} z^t+\BO\left(z^{t+\epsilon}\right)\right|
		+\left|\lambda z^{s+1-\mu}(1+\BO(z))\right|
		<2|z|^\epsilon
		<1
\end{align*}
Then $(z_1,w_1)\in D$.  \\

Case 2.  Suppose $\mu<\gamma<\mu+\epsilon$.  
 Then we rewrite \eqref{wbound2} as:
\begin{align}\label{wbound3}
\frac{|w_1|}{|z_1|^{\mu}} &
	=\frac{|w|}{|z|^{\mu}}
		\left|
		1-\hat{\beta} z^t+z^t\BO\left(z^{\epsilon}, z^{s+1-\gamma-t}\right) \right|
		<\left|
		1-\hat{\beta} z^t+\BO\left(z^{t+\epsilon}\right) \right|<1,  
\end{align}
by using the bound $s+1\geq\mu+t+2\epsilon$.  Then $(z_1,w_1)\in D$.  \\

Therefore $D$ is $f$-invariant (i.e., $f(D)\subset D$).  Now we show that $(z_n,w_n)\to (0,0)$ along $[1:0]$. \\

For any $(z,w)\in D$, 
since
\begin{align*}
z_1&=z\left(1-\frac{1}{r}z^{r}+\BO\left(z^{r+\epsilon}\right)\right)
\end{align*}
we see, by using standard techniques, that:
\begin{equation}\label{znbdd}
\frac{1}{z_1^r}=\frac{1}{z^r}+1+\BO(z^\epsilon)\quad\Rightarrow\quad 
z_n\sim (z^{-r}+n)^{-\frac{1}{r}}
\quad\Rightarrow\quad z_n\sim\frac{1}{n^\frac{1}{r}}.
\end{equation}
Furthermore, for any $(z,w)\in D$,
$$|w_n|<|z_n|^{\mu}\sim\frac{1}{n^{\frac{\mu}{r}}}\quad\mbox{ and }\quad \frac{|w_n|}{|z_n|}<|z_n|^{\mu-1}\sim\frac{1}{n^\frac{\mu-1}{r}}.$$ 
Since $\mu>1$, we see that $z_n,w_n,\frac{w_n}{z_n}\to0$ as $n\to\infty$.  Hence, for any $(z,w)\in D$, $(z_n,w_n)$ converges to the origin along $[1:0]$.  
This completes the proof of the main theorem. \\

%
We now find more precise estimates on the size of $w_n$. 

\begin{prop}\label{size}Suppose $f$ satisfies the conditions of Theorem~\hyperref[thmA]{A} and express $f$ as in \eqref{feq3}.  Let $\nu:=-\frac{s+1-t}{r}+\re\beta$.  For any $(z,w)\in D$ and large $n\in\N$, $z_n\sim n^{-\frac{1}{r}}$ and: 
\begin{equation}\label{wnbdd}
w_n\sim\begin{cases} 
e^{-\frac{r\re\beta}{r-t}n^{\frac{r-t}{r}}}&\mbox{ if }t\neq r,s=\infty \\
n^{-\frac{s+1-t}{r}}&\mbox{ if }t\neq r,s\neq\infty\mbox{ or }t=r,\nu>0 \\
n^{-\re\beta}&\mbox{ if }t=r,\nu<0 \\
\displaystyle n^{-\re\beta}\log n   &\mbox{ if }t=r,\nu=0.
\end{cases}\end{equation}
\end{prop}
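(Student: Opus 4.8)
The first coordinate is already under control: in the proof of Theorem~\hyperref[thmA]{A} we obtained $z_{n+1}^{-r}=z_n^{-r}+1+O(z_n^\epsilon)$, hence $z_n^{-r}=n+O(n^{1-\epsilon/r})$ and $z_n\sim n^{-1/r}$; more generally $z_j^{\,t}=j^{-t/r}\bigl(1+O(j^{-\epsilon/r})\bigr)$ for every fixed power, which is all we need below. The entire content of the proposition therefore lies in the $w$-coordinate. Writing $f$ as in \eqref{feq3}, the second coordinate gives the scalar recursion
\[
w_{n+1}=w_n\bigl(1-\beta z_n^{\,t}T(z_n)+w_nV(z_n,w_n)\bigr)+\lambda z_n^{\,s+1}S(z_n),
\]
and for $(z_n,w_n)\in D$ the estimates from the proof of Theorem~\hyperref[thmA]{A} give $|w_n|<|z_n|^\mu$, $w_nV(z_n,w_n)=O(z_n^{\,t+\epsilon})$, and $T,S=1+O(z_n)$. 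The plan is to view this as a perturbation of the linear recursion $w_{n+1}=w_n(1-\beta z_n^{\,t})+\lambda z_n^{\,s+1}$ and to solve it by discrete variation of parameters.

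Set $P_n:=\prod_{j=0}^{n-1}\bigl(1-\beta z_j^{\,t}T(z_j)+w_jV(z_j,w_j)\bigr)$, so that
\[
w_n=P_n\Bigl(w_0+\sum_{j=0}^{n-1}\frac{\lambda z_j^{\,s+1}S(z_j)}{P_{j+1}}\Bigr).
\]
Taking logarithms, $\log P_n=-\beta\sum_{j<n}z_j^{\,t}+(\text{an absolutely convergent remainder})$, and here the dichotomy $t<r$ versus $t=r$ enters. If $t<r$ then $\sum_{j<n}z_j^{\,t}\sim\frac{r}{r-t}n^{(r-t)/r}$, so $\log|P_n|\sim-\frac{r\re\beta}{r-t}n^{(r-t)/r}$, i.e.\ stretched-exponential decay, using $\re\beta=\re\Delta>0$. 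If $t=r$ then $\sum_{j<n}z_j^{\,r}\sim\log n$, so $P_n\sim c'n^{-\beta}$ for a nonzero constant $c'$, hence $|P_n|\asymp n^{-\re\beta}$ with $\re\beta>0$.

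It remains to estimate the Duhamel sum and read off the four cases. When $s=\infty$ we have $\lambda=0$ (so necessarily $t<r$), $w_n=P_nw_0$, and we are in the first line of \eqref{wnbdd}. When $s<\infty$ and $t<r$, the summand $a_j:=\lambda z_j^{\,s+1}S(z_j)/P_{j+1}$ satisfies $a_{j+1}/a_j=1+\beta z_j^{\,t}+o(z_j^{\,t})$, so $\sum_{j<n}a_j$ is dominated by its last terms and $\sum_{j<n}a_j\sim\lambda z_n^{\,s+1-t}/(\beta P_n)$; multiplying by $P_n$ gives $w_n\sim\frac{\lambda}{\beta}z_n^{\,s+1-t}\sim n^{-(s+1-t)/r}$, which dominates the stretched-exponentially small term $P_nw_0$ and yields the second line. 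When $t=r$, the summand has modulus $\asymp j^{\nu-1}$ for large $j$, so its partial sums are $\asymp n^\nu$ if $\nu>0$ (hence $w_n\asymp|P_n|\,n^\nu\asymp n^{-(s+1-t)/r}$, the second line again), convergent if $\nu<0$ (hence $w_n\sim(\text{nonzero limit})\cdot P_n\asymp n^{-\re\beta}$, third line), and $\asymp\log n$ if $\nu=0$ (hence $w_n\asymp n^{-\re\beta}\log n$, fourth line).

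The hard part is not the algebra above but controlling accumulated errors: one must show that replacing $T(z_j)$ by $1$, dropping $w_jV(z_j,w_j)$, and using only $z_j\sim j^{-1/r}$ perturbs $\log P_n$ by an amount that is genuinely negligible against its main term (delicate precisely because that term grows when $t<r$), and that the $O(j^{-\epsilon/r})$ correction in $z_j^{\,t}=j^{-t/r}(1+O(j^{-\epsilon/r}))$ does not disturb the asserted asymptotics of $\sum_{j<n}z_j^{\,t}$; here one uses $\epsilon<1/3$ while $r-t\ge1$, so the relevant error series is of lower order. A second, more delicate point arises only when $t=r$ and $\nu\le0$: in the regime $\nu<0$ one must check that $w_0+\sum_{j\ge0}\lambda z_j^{\,s+1}S(z_j)/P_{j+1}$ does not vanish (using $\lambda\ne0$, valid since $c=S(0)\ne0$ for $s<\infty$, together with the non-cancellation of the partial sums noted above), the exceptional orbits for which this limit is zero converging to $\0$ strictly faster; and in the borderline case $\nu=0$ one must isolate the genuinely logarithmic contribution, which is where the value of $\nu$ is critical.
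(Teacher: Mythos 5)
Your proposal is correct and follows essentially the same route as the paper: unroll the affine recursion $w_{n+1}=a_nw_n+b_n$ by discrete variation of parameters, estimate $\log\prod a_j$ via $\sum z_j^t$ with the $t<r$ versus $t=r$ dichotomy, and evaluate the Duhamel sum in the three $t=r$ subcases according to the sign of $\nu$ (the paper handles the $t\neq r$, $s<\infty$ sum by integration by parts rather than your last-term-domination argument, but this is a cosmetic difference). Note that the paper only claims one-sided bounds ($w_n=\BO(g(n))$), so your extra care about non-vanishing of the limiting constant when $\nu<0$ is not needed, and your parenthetical that $s=\infty$ forces $t<r$ is not justified by the hypotheses (though that case is still covered by your $t=r$ analysis).
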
\vspace{.5pc}

We use $w_n\sim g(n)$ to say that $w_n=\BO(g(n))$, so there may be  stricter bounds on $w_n$.

\begin{rem}\label{bddgamma}
If we let $\sim_i$ represent $<,>,$ or $=$, we see that $\nu\sim_i 0\Leftrightarrow \re\beta\sim_i \frac{s+1-t}{r}$.  \end{rem}

Our bounds in Proposition~\ref{size} are strictly better than our previous bound that $w_n\sim n^{-\frac{\mu}{r}}$:

\begin{itemize}
\item $s+1-t >\mu$ by Remark~\ref{smubdd}; 
\item $\re\beta>\frac{\mu}{r}$ when $t=r$ by \eqref{betahat}; and
\item $n^{-\re\beta}\log n<n^{-\frac{\mu}{r}}$ for large $n$ since $\re\beta>\frac{\mu}{r}$ when $t=r$.
\end{itemize}\vspace{0.5pc}

\begin{proof} 
%
We already showed that $z_n\sim n^{-\frac{1}{r}}$.  Now we bound $w_n$ by using \eqref{feq3}.  For any $n\in\N$, let: 
$$a_n:=1-\beta z_n^t+\BO\left(z_n^{t+\epsilon}\right)\quad\mbox{ and }\quad b_n:=z_n^{s+1}\lambda S(z_n)$$ 
such that $w_{n+1}=a_nw_n+b_n$ and $w:=w_0$.  Then:
\begin{align}\label{wn}
w_{n+1}
= a_n\left(a_{n-1} w_{n-1}+b_{n-1}\right)+b_n 
=
w\left(\prod_{j=0}^n a_j\right)+ \sum_{l=0}^n b_l\left( \prod_{j=l+1}^n a_j\right).
\end{align}
Let $\displaystyle A_{m,n}:=\prod_{j=m}^n (1-\beta z_j^t)$ for $m\leq n$ and $A_{m,n}=1$ for $m>n$.  Then \eqref{wn} becomes:
\begin{align} 
w_{n+1}&=
wA_{0,n}+ \lambda\sum_{l=0}^n z_l^{s+1}A_{l+1,n}+(h.o.t.)
\end{align}
We bound $A_{m,n}$ (for $0\leq m\leq n$):
\begin{align*}
\log A_{m,n}
&= \sum_{j=m}^n \log(1-\beta z_j^t)
=  -\beta \sum_{j=m}^n  z_j^t(1+\BO(z_j^{2t}))
\end{align*}
From \eqref{znbdd}, we know that $|z_j|<(j+2)^{-\frac{1}{r}}$ for $j>0$, so: 
$$-\beta^{-1}\log A_{m,n}\sim \sum_{j=m}^n (j+2)^{-\frac{t}{r}}\sim \int_{m-1}^n (x+2)^{-\frac{t}{r}}dx\sim \begin{cases} \frac{r}{r-t}\left((n+2)^{\frac{r-t}{r}}-(m+1)^{\frac{r-t}{r}}\right), & \mbox{ if }t\neq r \\ 
\log\left(\frac{n+2}{m+1}\right),&\mbox{ if }t=r,\end{cases}$$
where the sign of $\re(-\beta^{-1}\log A_{m,n})>0$.  Bounding $A_{0,n}$, the first term of $w_{n+1}$, we see that:
\begin{equation}\label{A0nbdd}
A_{0,n} \sim\begin{cases} 
e^{-\frac{r\re\beta}{r-t}(n+1)^\frac{r-t}{r}}
&\mbox{ if }t\neq r \\ (n+1)^{-\re\beta} &\mbox{ if }t=r.\end{cases}\end{equation}
If $s=\infty$, then $w_{n+1}=wA_{0,n}$ and so the previous estimates give us bounds on $w_{n+1}$.  If $s\neq\infty$, then we need to bound the second term of $w_{n+1}$.  Then:\begin{align}
\sum_{l=0}^n z_l^{s+1} A_{l+1,n} & \sim
\sum_{l=0}^n (l+2)^{-\frac{s+1}{r}} 
\begin{cases} e^{\frac{-r\re\beta}{r-t}\left[(n+2)^\frac{r-t}{r}-(l+2)^\frac{r-t}{r}\right]},
&\mbox{ if }t\neq r,s\neq\infty \\ \left(\frac{n+2}{l+2}\right)^{-\re\beta}, &\mbox{ if }t=r.\end{cases} \notag \\
&\sim
\begin{cases} 
\displaystyle\int_1^{n+2} x^{-\frac{s+1}{r}}e^{\frac{-r\re\beta}{r-t}\left[(n+2)^\frac{r-t}{r}-x^\frac{r-t}{r}\right]}dx,
& \mbox{ if }t\neq r,s\neq\infty \\  
\displaystyle (n+2)^{-\re\beta}\int_1^{n+2} x^{\nu-1} dx, &\mbox{ if }t=r.\end{cases} \notag \\  
&\sim
\begin{cases} 
\displaystyle e^{\frac{-r\re\beta}{r-t}(n+2)^\frac{r-t}{r} }\int_1^{n+2} x^{-\frac{s+1}{r}}e^{\frac{r\re\beta}{r-t}x^\frac{r-t}{r}}dx,
& \mbox{ if }t\neq r,s\neq\infty  \\  
\displaystyle (n+2)^{-\re\beta}\nu^{-1}\left( (n+2)^\nu-1\right), &\mbox{ if }t=r,\nu\neq0\\  
\displaystyle (n+2)^{-\re\beta}\log(n+2)   &\mbox{ if }t=r,\nu=0. 
\end{cases}\label{tneqreq}  \\
&\sim
\begin{cases} 
(n+1)^{-\frac{s+1-t}{r}},
& \mbox{ if }t\neq r,s\neq\infty\mbox{ (see Remark~\ref{tneqr})} \\  
(n+1)^{-\frac{s+1-t}{r}},
& \mbox{ if }t=r,\nu>0 \\  
\displaystyle (n+1)^{-\re\beta}, &\mbox{ if }t=r,\nu<0 \\ 
\displaystyle (n+1)^{-\re\beta}\log n   &\mbox{ if }t=r,\nu=0.  \label{sbdd}
\end{cases} 
\end{align}


\begin{rem}\label{tneqr}
We explain how to get from \eqref{tneqreq} to \eqref{sbdd} when $t\neq r$.  Let:
$$a=-\frac{s+1}{r}\quad b=\frac{r\re\beta}{r-t},\quad,c=\frac{r-t}{r}\quad\Rightarrow a-c+1=-\frac{s+1-t}{r}.$$
Then the integral in \eqref{tneqreq}, ignoring the constant in front, simplifies to: 
\begin{align}
\int_1^{n+2} x^a e^{b x^c}dx
&=(bc)^{-1} \int_1^{n+2} x^{a-c+1}  \left[ bc x^{c-1} \right]e^{b x^c}dx \notag \\
&=(bc)^{-1}\left[x^{a-c+1}e^{bx^c} \Big|_1^{n+2}
-\int_1^{n+2} (a-c+1) x^{a-c} e^{bx^c} dx\right] \notag \\
&= (bc)^{-1}\left( (n+2)^{-\frac{s+1-t}{r}}e^{b(n+2)^c}-e^b\right)
+\int_1^{n+2} (-a+c-1) x^{a-c} e^{bx^c} dx \label{bddint}
\end{align}
For large enough $x$, the integrand is strictly increasing.  We can ignore small values of $x$ (i.e., $1\leq x\leq \kappa \ll n$) in \eqref{bddint} since the constant multiple in front ($e^{-b (n+2)^c}$) will make the integral on $[1,\kappa]$ go to zero as $n$ goes to infinity.  So we bound the integral in \eqref{bddint} by the integral on $[\kappa,n]$:
$$\int_1^{n+2} (-a+c-1) x^{a-c} e^{bx^c} dx
\sim (n+2) (n+2)^{a-c} e^{b(n+2)^c}.$$
Therefore, when $t\neq r,s\neq\infty$, we bound \eqref{tneqreq} by \eqref{sbdd} as follows:
$$e^{-b (n+2)^c}\int_1^{n+2} x^a e^{b x^c}dx
\sim (n+2)^{-\frac{s+1-t}{r}}.$$
\end{rem}

Finally, we combine \eqref{A0nbdd} and \eqref{sbdd} to get the bounds in \eqref{wnbdd}.  
The bound in the case when $t=r,\nu>0$ follows by Remark~\ref{bddgamma}.  This concludes the proof.
\end{proof}

\section[Summary]{Summary of results on attracting domains in $\C^2$} \label{summary}

Let $f\in\TId(\C^2,\0)$ have characteristic direction $[v]$.  In the following table, we summarize results on the existence of invariant attracting domains whose points converge to $\0$.  In order to simplify the table, let: \begin{align*}
A \quad& \mbox{ be Abate's Index of }f\mbox{ at }[v] \mbox{ (see \ref{director})}. \\
\Delta \quad& \mbox{ be the director of }f\mbox{ corresponding to }[v]
\mbox{ (see \ref{director}).}\\
R\quad & \mbox{ be }\left\{z\in\C ~\bigg|~\re(z)>-\frac{m}{k}, \bigg|z-\frac{m+1-\frac{m}{k}}{2}\bigg|>\frac{m+1+\frac{m}{k}}{2}\right\}\subset\C,\\ 
U \quad& \mbox{ be an open set whose points converge to }\0,\mbox{ each point along some direction.} \\
\Oa \quad& \mbox{ be an invariant domain of attraction whose points converge to }\0\mbox{ along }[v].  \\
 \Ob \quad& \mbox{ be }\Oa\mbox{ such that }f\mbox{ is conjugate to translation on }\Oa. \\
 \Oc \quad& \mbox{ be }\Ob\mbox{ and a Fatou-Bieberbach domain when }f\mbox{ is an automorphism}.  \\
\F \quad& \mbox{ be }\{f \ | \ f,[v]\mbox{ satisfy assumptions of Theorem }\hyperref[thmA]{A}\},\mbox{ which implies } \exists\Oa.
\end{align*}
\vspace{-1pc}
\renewcommand{\tabcolsep}{2pt}
 \renewcommand\arraystretch{1.4}

 \begin{savenotes}
\begin{table}[h]
\begin{center}\begin{tabular}{|l|lll|lll|}
\hline
\textbf{$[v]$ is} & \textbf{Non-Degenerate} & & $m=0$ & \textbf{Degenerate} &  &$m>0$ \\
& \textit{all cases known} & \textit{ for }$\Oa$&&$\exists$\textit{ open cases}&\textit{ for }$\Oa$&\\
\hline
\textbf{Fuchsian} & $\bullet \re \Delta> 0\quad\Rightarrow $ & $\left\{ \begin{array}{l}
 	\exists\Oc \\
	\exists (k-1)~\Oa
	\end{array}\right.$ 
 & $ \begin{array}{l}
 	\mbox{[H1]} \\
	\mbox{[AR]}
	\end{array}$  
& $\bullet A\in R$ & $\Rightarrow\exists \Oa$  & \cite{V2}\\ 
\textbf{$1+m=n$} & $\bullet \re \Delta=0\neq \Delta$ & $\Rightarrow\nexists \Oa$  &   \cite{L1,Ro1} & $\bullet \exists A\not\in R$   & s.t. $\nexists\Oa$ & \cite{L1}\footnote{The example given that satisfies these conditions was $f(z,w)=(z+w^2,w)$.} \\ 
 &$\bullet \re \Delta< 0$ &  $\Rightarrow \nexists\Oa$ &\cite{H1}&$\bullet f\in\F$&$\Rightarrow\exists\Oa$&Thm.~\hyperref[thmA]{A}\\
 \hline
 $\begin{array}{l}
 	\textbf{Irregular}  \\
	1+m<n 	\end{array}$ 
	 & $\bullet\left. \begin{array}{l}
	\Delta\equiv 0 \\
	r\not\equiv0 \\
	\end{array}\right\}$ always
& $\Rightarrow \exists \Oc$ & ~\cite{L2,V2}
	 & $\bullet$
	always
& $\Rightarrow \exists \Oc$ & ~\cite{V2}  \\

 \hline
 $\begin{array}{l}
 	\textbf{Apparent}  \\
	1+m>n 	\end{array}$    
& Does not apply   && & $\bullet\exists$ examples & s.t.~$\left\{ \begin{array}{l}
 	\exists\Oa \\
	\nexists\Oa
	\end{array}\right.$ 
&  $\begin{array}{l}
 	\mbox{[V2]}  \\ 
	\mbox{[V2]} 	\end{array}$  \\ 
	&&&& $\bullet f\in\F$&$\Rightarrow\exists\Oa$&Thm.~\hyperref[thmA]{A},\cite{Ro2}\footnote{The main theorem in \cite{Ro2} applied only to apparent characteristic directions and more specifically required $t=k$, whereas Theorem~\hyperref[thmA]{A} applies to apparent (and other) characteristic directions for $t\geq k$.}\\
 \hline
$ \begin{array}{l}
 	\0\mbox{ is}\\
	\mbox{\textbf{Dicritical}} \end{array}$
 & $\bullet\left. \begin{array}{l}
	\Delta\equiv 0 \\
	r\equiv 0
	\end{array}\right\}$ always &$\Rightarrow
\left\{ \begin{array}{l}
 	\exists U \\
	\nexists \Oa
	\end{array}\right.$ 
 & $ \begin{array}{l}
 	\mbox{[Bro]} \\
	\mbox{[Ro1]}
	\end{array}$   
	& 
$\bullet$ always
&  $\Rightarrow \exists U$ & \cite{Bro}  \\
~$n=\infty$ &&&& $\bullet f\in\F$&$\Rightarrow\exists\Oa$&Thm.~\hyperref[thmA]{A}\\
\hline
 \end{tabular}\caption{Summary of the existence of invariant domains of attraction in $\C^2$.}\label{C2table} \end{center}\end{table}
 \end{savenotes}

As we see from Table~\ref{C2table}, Theorem~\hyperref[thmA]{A} shows new instances in which there exists a domain of attraction along $[v]$ (i.e., $\exists\Oa$).  In particular, the new instances correspond to when $k<t\leq r$, so that $[v]$ is degenerate.  In this case, Theorem~\hyperref[thmA]{A} proves the existence of a domain of attraction along $[v]$ (when its conditions are satisfied) for all four categories of degenerate characteristic directions: Fuchsian, irregular, apparent, and dicritical. \\


\begin{thebibliography}{BF}
\bibitem[A1]{A1}
M. Abate.
\newblock\emph{Holomorphic classification of 2-dimensional quadratic maps tangent to the identity}.
\newblock  Surikaisekikenkyusho Kokyuroku, \textbf{1447} (2005), 1-14.

\bibitem[A2]{A2}
M. Abate.
\newblock\emph{Discrete holomorphic local dynamical systems}.
\newblock  In ``Holomorphic dynamical systems", Eds. G. Gentili, J. Guenot, G. Patrizio, Lect. Notes in Math. 1998, Springer, Berlin, 2010, 1-55.

\bibitem[AT]{AT}
M. Abate and F. Tovena.
\newblock\emph{Poincar\'{e}-Bendixson theorems for meromorphic connections and homogeneous vector fields}.
\newblock  J. Diff. Eq., \textbf{251} (2011), 2612-2684.

\bibitem[AR]{AR}
M. Arizzi and J. Raissy.
\newblock\emph{On \'{E}calle-Hakim's theorems in holomorphic dynamics.}
\newblock In ``Frontiers in Complex Dynamics: in Celebration of John MilnorÕs 80th Birthday," 2014, 387-449
%

\bibitem[Bro]{Bro} F. E. Brochero Mart\'{i}nez.
\newblock\emph{Dynamics of dicritical diffeomorphisms in $(\C^n,0)$.}
\newblock Rev. Semin. Iberoam. Mat. \textbf{3} (2008), 33-40. 

\bibitem[CG]{CG} L. Carleson and T. Gamelin.
\newblock \emph{Complex dynamics}.
\newblock Springer-Verlag, New York, 1993.

\bibitem[H1]{H1} M. Hakim.
\newblock \emph{Transformations tangent to the identity.  Stable pieces of manifolds}.
\newblock Preprint, 1997.

\bibitem[H2]{H2} M. Hakim.
\newblock \emph{Analytic transformations of $(\C^p,0)$ tangent to the identity}.
\newblock Duke Mathematical Journal, \textbf{92} (1998), 403-428. 

\bibitem[L1]{L1} S. Lapan.
\newblock  \emph{On the existence of attracting domains for maps tangent to the identity}.  
\newblock Ph.D. Thesis, U. of M. (2013).

\bibitem[L2]{L2} S. Lapan.
\newblock  \emph{Attracting domains of maps tangent to the identity whose only characteristic direction is non-degenerate}. 
\newblock Int. J. Math., \textbf{24} (2013). 

\bibitem[M]{M} J. Milnor.
\newblock \emph{Dynamics in one complex variable}.
\newblock Annals of Mathematics Studies, Third Edition, 2006.

\bibitem[R]{R} M. Rivi.
\newblock \emph{Local behavior of discrete dynamical systems}.
\newblock Ph.D. Thesis, Universit\`{a} di Firenze, 1999.

\bibitem[Ro1]{Ro1} F. Rong.
\newblock \emph{The non-dicritical order and attracting domains of holomorphic maps tangent to the identity.}
\newblock Int. J. Math., \textbf{25} (2014).

\bibitem[Ro2]{Ro2} F. Rong.
\newblock \emph{New invariants and attracting domains for holomorphic maps in $\C^2$ tangent to the identity.} 
\newblock  Publicacions Matem\`{a}tiques, \textbf{59} (2015).

\bibitem[SV]{SV}B. Stens\o nes and L. Vivas.
\newblock\emph{Basins of attraction of automorphism in $\C^3$}.
\newblock Ergodic Theory Dynamical Systems \textbf{34} (2014), 689-692.

\bibitem[U]{U} T. Ueda.
\newblock Seminar Talk, Mt. Holyoke, 1994.

\bibitem[V1]{V1}L. Vivas.
\newblock\emph{Fatou-Bieberbach domains as basins of attraction of automorphisms tangent to the identity}.
\newblock J. Geom. Anal. \textbf{22} (2012), 352-382.

\bibitem[V2]{V2}L. Vivas.
\newblock\emph{Degenerate characteristic directions for maps tangent to the identity}.
\newblock Indiana U. Math. J. \textbf{61} (2012), 2019-2040.


\bibitem[W1]{W1}B. Weickert.
\newblock\emph{Automorphisms of $\C^2$ tangent to the identity}.
\newblock Ph.D. Thesis, University of Michigan, 1997.

\bibitem[W2]{W2}B. Weickert.
\newblock\emph{Attracting basins for automorphisms of $\C^2$}.
\newblock Invent. Math. \textbf{132} (1998), 581-605. 

\end{thebibliography}
\end{document}